\renewcommand{\labelenumi}{(\roman{enumi})}
\newtheorem{theorem}{Theorem}
\newtheorem{corollary}[theorem]{Corollary}
\theoremstyle{definition}
\newtheorem{example}[theorem]{Example}
\newtheorem{remark}[theorem]{Remark}
\newcommand{\Z}{{\mathbb Z}}
\newcommand{\C}{{\mathbb C}}
\newcommand{\Rm}{{\mathscr R}}
\newcommand{\W}{\mathcal{W}}
\begin{document}
\title{Combinatorial Yang-Baxter maps arising from tetrahedron equation}

\author{Atsuo Kuniba}

\address{Institute of Physics, University of Tokyo, Komaba, 153-8902, Japan}

\maketitle

\begin{abstract}
We survey the matrix product solutions of  
the Yang-Baxter equation obtained recently from the tetrahedron equation.
They form a family of quantum $R$ matrices of
generalized quantum groups interpolating 
the symmetric tensor representations of $U_q(A^{(1)}_{n-1})$ and the
anti-symmetric tensor representations of $U_{-q^{-1}}(A^{(1)}_{n-1})$.
We show that at $q=0$
they all reduce to the Yang-Baxter maps called combinatorial $R$,
and describe the latter by explicit algorithm.
\end{abstract}

\section{Introduction}\label{sec:intro}

Tetrahedron equation \cite{Zam80} is a generalization of the 
Yang-Baxter equation \cite{Bax} and serves as a key to the 
integrability in three dimension (3D).
Typically it has the form called  $RRRR$ type or $RLLL$ type:
\begin{align*}
&\Rm_{1,2,4}\Rm_{1,3,5}\Rm_{2,3,6}\Rm_{4,5,6}=
\Rm_{4,5,6}\Rm_{2,3,6}\Rm_{1,3,5}\Rm_{1,2,4},\\
&{\mathscr L}_{1,2,4}{\mathscr L}_{1,3,5}{\mathscr L}_{2,3,6}\Rm_{4,5,6}=
\Rm_{4,5,6}{\mathscr L}_{2,3,6}{\mathscr L}_{1,3,5}{\mathscr L}_{1,2,4}.
\end{align*}
Here $\Rm \in \mathrm{End}(F^{\otimes 3})$ and 
${\mathscr L} \in \mathrm{End}(V \otimes V \otimes F)$ for some vector spaces
$F$ and $V$.
The above equalities hold in $\mathrm{End}(F^{\otimes 6})$ and 
$\mathrm{End}(V^{\otimes 3}\otimes F^{\otimes 3})$ respectively, and 
the indices specify the components on which 
$\Rm$ and ${\mathscr L}$ act nontrivially.
We call the solutions $\Rm$ and ${\mathscr L}$ 3D $R$ and 3D $L$, respectively.

The tetrahedron equations are reducible to the Yang-Baxter equation 
\begin{align*}
S_{1,2}S_{1,3}S_{2,3} = S_{2,3}S_{1,3}S_{1,2}
\end{align*}
if the spaces $4,5,6$ are evaluated away appropriately \cite{BS}.
Such reductions and the relevant quantum group aspects have been
studied systematically in the recent work \cite{KS,KO2,KOS} 
by Okado, Sergeev and the author 
for the distinguished example of 3D $R$ and 3D $L$ originating in the 
quantized algebra of functions \cite{KV}.
They correspond to the choice 
$V=\C^2$ and the $q$-oscillator Fock space 
$F=\bigoplus_{m \ge 0}\C |m\rangle$.  

In this paper we first review 
the latest development in \cite{KOS} concerning the reduction by trace.
It generates $2^n$ solutions $S(z)$ to the Yang-Baxter equation
from the $n$ product of $\Rm$ and ${\mathscr L}$.
Any $S(z)$ is rational in the parameter $q$ and the (multiplicative) 
spectral parameter $z$.
Symmetry of $S(z)$ is described by
{\em generalized quantum groups} \cite{H,HY}
which include quantum affine \cite{D1,Ji} and super algebras of type $A$.

In the last Section \ref{sec:cr}, 
we supplement a new result, Theorem \ref{th:main}.
It shows that at $q=0$ each $S(z)$ yields a {\em combinatorial $R$}, 
a certain bijection between finite sets satisfying the Yang-Baxter equation.
We describe it by explicit combinatorial algorithm generalizing \cite{NY,HI}.  

The notion of combinatorial $R$ originates 
in the crystal base theory, a theory of quantum groups at $q=0$ \cite{Ka}. 
The motivation for $q=0$ further goes back to
Baxter's corner transfer matrix method \cite{Bax,DJKMO},
where it corresponds to the low temperature limit manifesting 
fascinating combinatorial features of Yang-Baxter integrable lattice models. 
It has numerous applications including 
generalized Kostka-Foulkes polynomials, 
Fermionic formulas of affine Lie algebra 
characters, integrable cellular automata in one dimension and so forth.
See for example \cite{KMN,NY,HKOTY,O,IKT,K} and reference therein.
Combinatorial $R$'s form most systematic examples of 
set-theoretical solutions to the Yang-Baxter equation 
(Yang-Baxter maps) \cite{D2,V} arising from the representation theory of 
quantum groups.

In this paper the 3D $R$ and the 3D $L$ will mainly serve as   
the constituent of the $S(z)$ which  
tends to the combinatorial $R$ at $q=0$.
However they possess a decent combinatorial aspect by themselves 
as pointed out in \cite[eq.(2.41)]{KO1} for the 3D $R$.
In fact their limits (\ref{mrei}) define the maps 
\begin{align*}
\lim_{q\rightarrow 0}\Rm : 
\begin{pmatrix}i \\ j \\ k
\end{pmatrix}\mapsto 
\begin{pmatrix}j+\max(i-k,0)\\
\min(i,k)\\
 j+\max(k-i,0)
\end{pmatrix},
\quad
\lim_{q\rightarrow 0}{\mathscr L}: 
\begin{pmatrix}
i \\ j \\ k
\end{pmatrix}\mapsto 
\begin{pmatrix}
j+\max(i-j-k,0)\\
\min(i,k+j)\\
\max(k+j-i,0)
\end{pmatrix}
\end{align*}
on $(\Z_{\ge 0})^3$ and on
$\{0,1\}\times \{0,1\} \times \Z_{\ge 0}$, respectively.
The tetrahedron equations survive the limit 
nontrivially as the {\em combinatorial tetrahedron equations}, e.g.,

\begin{picture}(270,160)(-29,-80)

\put(231,58){$\Rm_{1,2,4}$}
\put(222,56){\vector(3,-1){24}}

\put(190,56){432361}

\put(159,65){$\Rm_{1,3,5}$}
\put(157,59){\vector(1,0){28}}
\put(122,56){234341}

\put(91,65){$\Rm_{2,3,6}$}
\put(89,59){\vector(1,0){28}}
\put(55,56){261344}

\put(18,60){$\Rm_{4,5,6}$}
\put(27,51){\vector(3,1){24}}

\put(0,40){261435}
\put(27,36){\vector(3,-1){24}}
\put(18,23){$\Rm_{1,2,4}$}
\put(55,20){621835}
\put(89,23){\vector(1,0){28}}
\put(92,11){$\Rm_{1,3,5}$}
\put(122,20){423815}
\put(157,23){\vector(1,0){28}}
\put(160,11){$\Rm_{2,3,6}$}
\put(190,20){432816}
\put(222,29){\vector(3,1){24}}
\put(231,21){$\Rm_{4,5,6}$}
\put(250,40){432361}

\put(0,-80){
\put(231,58){${\mathscr L}_{1,2,4}$}
\put(222,56){\vector(3,-1){24}}

\put(190,56){110354}

\put(159,65){${\mathscr L}_{1,3,5}$}
\put(157,59){\vector(1,0){28}}
\put(122,56){011344}

\put(91,65){${\mathscr L}_{2,3,6}$}
\put(89,59){\vector(1,0){28}}
\put(55,56){011344}

\put(18,60){$\Rm_{4,5,6}$}
\put(27,51){\vector(3,1){24}}

\put(0,40){011435}
\put(27,36){\vector(3,-1){24}}
\put(18,23){${\mathscr L}_{1,2,4}$}
\put(55,20){101535}
\put(89,23){\vector(1,0){28}}
\put(92,11){${\mathscr L}_{1,3,5}$}
\put(122,20){101535}
\put(157,23){\vector(1,0){28}}
\put(160,11){${\mathscr L}_{2,3,6}$}
\put(190,20){110536}
\put(222,29){\vector(3,1){24}}
\put(231,21){$\Rm_{4,5,6}$}
\put(250,40){110354}
}

\end{picture}

They constitute the local relations responsible 
for the Yang-Baxter equation of the combinatorial $R$ 
in Corollary \ref{co:sae}.

The layout of the paper is as follows.
In Section \ref{sec:RL}  we recall the definition of the 3D $R$ and 3D $L$.
In Section \ref{sec:tet}  tetrahedron equations of type $RRRR$ and $RLLL$ are
given with their generalization to $n$-layer case.
In Section \ref{sec:ybe} the $2^n$ family of solutions $S(z)$ to the Yang-Baxter 
equation are constructed by applying the trace reduction.
In Section \ref{sec:qg}  generalized quantum group symmetry 
of $S(z)$ is explained.
Section \ref{sec:cr}  contains the main Theorem \ref{th:main}, which 
describes the combinatorial $R$ arising from $S(z)$ at $q=0$
in terms of explicit combinatorial algorithm.

Throughout the paper we assume that $q$ is not a root of unity and 
use the notations:
\begin{align*}
(z;q)_m = \prod_{k=1}^m(1-z q^{k-1}),\;\;
(q)_m = (q; q)_m,\;\;
\binom{m}{k}_{\!\!q}= \frac{(q)_m}{(q)_k(q)_{m-k}}.
\end{align*}

\section{3D $R$ and 3D $L$}\label{sec:RL}
Let $F$ be a Fock space $F = \bigoplus_{m\ge 0}\C |m \rangle$
and ${\bf a}^\pm, {\bf k} \in \mathrm{End}(F)$ be the operators on it 
called $q$-oscillators:
\begin{align}
{\bf a}^+|m\rangle = |m+1\rangle,\quad
{\bf a}^-|m\rangle = (1-q^{2m})|m-1\rangle,\quad
{\bf k}|m\rangle = q^m|m\rangle.
\label{ac1}
\end{align}
They satisfy the relations
\begin{align}
{\bf k}\,{\bf a}^{\pm} = q^{\pm 1}{\bf a}^{\pm}\,{\bf k},\quad
{\bf a}^+{\bf a}^- = 1-{\bf k}^2,\quad
{\bf a}^-{\bf a}^+ = 1-q^2 {\bf k}^2.
\label{ac2}
\end{align}

We define a three dimensional $R$ operator, 3D $R$ for short,  
$\Rm \in \mathrm{End}(F^{\otimes 3})$ by 
\begin{align}
\Rm(|i\rangle \otimes |j\rangle \otimes |k\rangle) = 
\sum_{a,b,c\ge 0} \Rm^{a,b,c}_{i,j,k}
|a\rangle \otimes |b\rangle \otimes |c\rangle,\label{Rabc}
\end{align}
where several formulas are known for the matrix element:
\begin{align}
&\Rm^{a,b,c}_{i,j,k} =\delta^{a+b}_{i+j}\delta^{b+c}_{j+k}
\sum_{\lambda+\mu=b}(-1)^\lambda
q^{i(c-j)+(k+1)\lambda+\mu(\mu-k)}
\frac{(q^2)_{c+\mu}}{(q^2)_c}
\binom{i}{\mu}_{\!\!q^2}
\binom{j}{\lambda}_{\!\!q^2},\label{Rex1}\\
&\phantom{\Rm^{a,b,c}_{i,j,k}} = \delta^{a+b}_{i+j}\delta^{b+c}_{j+k}
\sum_{\lambda+\mu=b}(-1)^\lambda
q^{ik+b+\lambda(c-a)+\mu(\mu-i-k-1)}\binom{i}{\mu}_{\!q^2}
\binom{\lambda+a}{a}_{\!q^2},\label{Rex2}\\
&\phantom{\Rm^{a,b,c}_{i,j,k}} = \delta^{a+b}_{i+j}\delta^{b+c}_{j+k}\,q^{ik+b}
\oint\frac{du}{2\pi {\mathrm i}u^{b+1}}
\frac{(-q^{2+a+c}u;q^2)_\infty(-q^{-i-k}u;q^2)_\infty}
{(-q^{a-c}u;q^2)_\infty(-q^{c-a}u;q^2)_\infty}. \label{Rex3}
\end{align}
where $\delta^j_{k}=\delta_{j,k}$ just to save the space.
The sum (\ref{Rex1}) is over $\lambda, \mu \in \Z_{\ge 0}$ 
satisfying $\lambda+\mu=b$, $\mu\le i$ and $\lambda \le j$.
The sum (\ref{Rex2}) is over $\lambda, \mu \in \Z_{\ge 0}$ 
satisfying $\lambda+\mu=b$ and $\mu\le i$.
The integral (\ref{Rex3}) encircles $u=0$ anti-clockwise
so as to pick the coefficient of $u^b$.
Derivation of  these formulas can be found in 
\cite[Th.2]{KO1} for (\ref{Rex1}), 
\cite[Sec.4]{KM} for (\ref{Rex2}) and \cite{S} for (\ref{Rex3}).
The 3D $R$ can also be expressed as a collection of operators on the 
third component. For example (\ref{Rex1})  yields
\begin{align}
&\Rm(|i\rangle \otimes |j\rangle \otimes |k\rangle) = 
\sum_{a,b\ge 0} |a\rangle \otimes |b \rangle \otimes 
\Rm^{a,b}_{i,j}|k\rangle,\quad
\Rm^{a,b}_{i,j} \in \mathrm{End}(F),\label{rop}\\
&\Rm^{a,b}_{i,j} = 
\delta^{a+b}_{i+j}\sum_{\lambda+\mu=b}(-1)^\lambda
q^{\lambda+\mu^2-ib}
\binom{i}{\mu}_{\!\!q^2}
\binom{j}{\lambda}_{\!\!q^2}
({\bf a}^-)^\mu({\bf a}^+)^{j-\lambda}
{\bf k}^{i+\lambda-\mu},\label{ropp}
\end{align}
where the sum is taken under the same condition as in (\ref{Rex1}),
which guarantees that the powers of $q$-oscillators are nonnegative.

The 3D $R$ was first obtained as the intertwiner of the quantized coordinate ring 
$A_q(sl_3)$ \cite{KV}\footnote{
The formula for it on p194 in \cite{KV} contains a misprint unfortunately.
The formula (\ref{Rex1}) here is a correction of it.}.  
It was found later also 
from a quantum geometry consideration in a different gauge \cite{BS}. 
They were shown to be the same object in \cite[eq.(2.29)]{KO1}.
See also \cite[App.~A]{KO2} and \cite[Sec.~4]{KM} for 
the recursion relations characterizing $\Rm$ and useful corollaries.
Here we note the properties \cite{KO1}
\begin{align}
&\Rm = \Rm^{-1},\quad 
\Rm^{a,b,c}_{i,j,k}= \Rm^{c,b,a}_{k,j,i} \in q^\xi \Z[q^2],\quad
\Rm^{a,b,c}_{i,j,k}=
\frac{(q^2)_i(q^2)_j(q^2)_k}{(q^2)_a(q^2)_b(q^2)_c}
\Rm_{a,b,c}^{i,j,k},\label{rtb}
\end{align}
where $\xi = 0,1$ is specified by 
$\xi \equiv (a-j)(c-j)$ mod 2.

\begin{example}\label{ex:skrb}
The following is the list of all the nonzero $\Rm^{a,b,c}_{3,1,2}$.
\begin{alignat*}{2}
\Rm^{1,3,0}_{3,1,2} &=  -q^2 (1 - q^4) (1 - q^6),\quad &
\Rm^{2,2,1}_{3,1,2} &=  (1 + q^2) (1 - q^6) (1 - q^2 - q^6),\\
\Rm^{1,3,0}_{3,1,2} &=  q^6, &
\Rm^{3,1,2}_{3,1,2} &=  -q^2 (-1 - q^2 + q^6 + q^8 + q^{10}).
\end{alignat*}
We see 
$\lim_{q\rightarrow 0}\Rm^{a,b,c}_{3,1,2} = \delta^a_2\delta^b_2\delta^c_1$
in agreement with (\ref{mrei}) with $\epsilon=0$.

The following is the list of all the nonzero $\Rm^{a,b}_{3,1}$.
\begin{alignat*}{2}
\Rm^{1,3}_{3,1} &= ({\bf a}^-)^3 {\bf a}^+ - q^{-4}(1+q^2+q^4)({\bf a}^-)^2{\bf k}^2,
\quad &
\Rm^{4,0}_{3,1} &= {\bf a}^+ {\bf k}^3, \\
\Rm^{3,1}_{3,1} &= q^{-2}(1+q^2+q^4) {\bf a}^- {\bf a}^+{\bf k}^2
-q^{-2} {\bf k}^4, &
\Rm^{0,4}_{3,1} &= -q^{-2}({\bf a}^-)^3{\bf k}.\\
\Rm^{2,2}_{3,1} & = q^{-4}(1+q^2+q^4)\bigl(q^2({\bf a}^-)^2{\bf a}^+{\bf k}
-{\bf a}^-{\bf k}^3\bigr).
\end{alignat*}
A part of them will be used in Example \ref{ex:yum}.
\end{example}

Let us proceed to the 3D $L$ \cite{BS}.
Set $V = \C v_0 \oplus \C v_1$.
We define a three dimensional $L$ operator, 3D $L$ for short,  
${\mathscr L} \in \mathrm{End}(V^{\otimes 2}\otimes F)$ by 
a format parallel with (\ref{rop}):
\begin{align}\label{rino}
{\mathscr L}(v_\alpha \otimes v_\beta \otimes |m\rangle)
= \sum_{\gamma,\delta}v_\gamma \otimes v_\delta \otimes 
{\mathscr L}_{\alpha, \beta}^{\gamma,\delta}|m\rangle,
\end{align}
where ${\mathscr L}_{\alpha, \beta}^{\gamma,\delta}
\in \mathrm{End}(F)$ are zero except the following six cases:
\begin{align}\label{lak}
{\mathscr L}_{0, 0}^{0,0}&= {\mathscr L}_{1,1}^{1,1} = 1,\;\;
{\mathscr L}_{0, 1}^{0,1} = -q{\bf k},\;\;
{\mathscr L}_{1,0}^{1,0} = {\bf k},\;\;
{\mathscr L}_{1,0}^{0,1} = {\bf a}^-,\;\;
{\mathscr L}^{1,0}_{0,1} = {\bf a}^+.
\end{align}
Thus ${\mathscr L}$ may be regarded 
as defining a six-vertex model \cite{Bax}
whose Boltzmann weights take values in the $q$-oscillators.
One may also write (\ref{rino}) like (\ref{Rabc}) as
\begin{equation}\label{Lex}
\begin{split}
&{\mathscr L}(v_\alpha \otimes v_\beta \otimes |m\rangle) = 
\sum_{\gamma,\delta, j} {\mathscr L}^{\gamma,\delta, j}_{\alpha, \beta, m}
v_\gamma \otimes v_\delta \otimes |j\rangle,\\
&{\mathscr L}^{0, 0, j}_{0, 0, m}
={\mathscr L}^{1, 1, j}_{1, 1, m}=\delta^j_m,\quad
{\mathscr L}^{0, 1, j}_{0, 1, m}= -\delta^j_mq^{m+1},\quad
{\mathscr L}^{1, 0, j}_{1, 0, m}=\delta^j_mq^m,\\
&{\mathscr L}^{0, 1, j}_{1, 0, m}=\delta^j_{m-1}(1-q^{2m}),\quad
{\mathscr L}^{1, 0, j}_{0, 1, m}=\delta^j_{m+1}.
\end{split}
\end{equation}
The other ${\mathscr L}^{\gamma,\delta, j}_{\alpha, \beta, m}$ are zero.

We assign a solid arrow to $F$ and 
a dotted arrow to $V$, and 
depict the matrix elements of 3D $R$ and 3D $L$ as 
\[
\begin{picture}(200,49)(-90,-35)
\put(-122,-20){$\Rm^{a,b,c}_{i,j,k}=$}
\put(-30,-10){\vector(-3,-1){40}}
\put(-78,-27){$\scriptstyle{c}$}
\put(-48,-16){\vector(0,1){20}}\put(-48,-16){\line(0,-1){16}}
\put(-48,-16){\vector(3,-1){18}} \put(-48,-16){\line(-3,1){18}}
\put(-51,7){$\scriptstyle{b}$}
\put(-72,-10){$\scriptstyle{i}$}
\put(-27,-26){$\scriptstyle{a}$}
\put(-50,-39){$\scriptstyle{j}$}
\put(-27,-12){$\scriptstyle{k}$}
\put(130,-12){
\put(-88,-7){${\mathscr L}^{a,b,c}_{i,j,k}=$}
\put(3,1){\vector(-3,-1){40}}
\multiput(-15,-18)(0,3){10}{.}\put(-13.5,12.6){\vector(0,1){1}}
\multiput(-30,0)(3,-1){11}{.}\put(5,-10.6){\vector(3,-1){1}}
\put(-44,-14){$\scriptstyle{c}$}
\put(-37,0){$\scriptstyle{i}$}
\put(-16,17){$\scriptstyle{b}$}
\put(-16,-26){$\scriptstyle{j}$}
\put(9,-14){$\scriptstyle{a}$}
\put(6,0){$\scriptstyle{k}$}
}
\end{picture}
\]
We will also depict $\Rm$ and ${\mathscr L}$ by
the same diagrams with no indices. 

\section{Tetrahedron equation}\label{sec:tet}

The $\Rm$ satisfies the tetrahedron equation of $RRRR$ type \cite{KV}
\begin{align}\label{te1}
\Rm_{1,2,4}\Rm_{1,3,5}\Rm_{2,3,6}\Rm_{4,5,6}=
\Rm_{4,5,6}\Rm_{2,3,6}\Rm_{1,3,5}\Rm_{1,2,4},
\end{align}
which is an equality in $\mathrm{End}(F^{\otimes 6})$.
Here $\Rm_{i,j,k}$ acts as $\Rm$ on the 
$i,j,k$ th components from the left in the 
tensor product $F^{\otimes 6}$, and as identity elsewhere\footnote{
These indices should not be confused with 
those specifying the matrix elements $\Rm^{a,b,c}_{i,j,k}$.}.
By denoting the $F$ at the $i$ th component by a solid arrow with $i$, 
(\ref{te1}) is depicted as follows:

{\unitlength 0.1in
\begin{picture}(6.8900, 15)(4,-27)
%
{\color[named]{Black}{%
\special{pn 8}%
\special{pa 3938 1476}%
\special{pa 3248 2496}%
\special{fp}%
\special{sh 1}%
\special{pa 3248 2496}%
\special{pa 3302 2452}%
\special{pa 3278 2452}%
\special{pa 3270 2430}%
\special{pa 3248 2496}%
\special{fp}%
}}%
\put(32.0900,-25.6000){\makebox(0,0){6}}%
%
{\color[named]{Black}{%
\special{pn 8}%
\special{pa 4620 1722}%
\special{pa 3930 1986}%
\special{fp}%
}}%
%
{\color[named]{Black}{%
\special{pn 8}%
\special{pa 3932 1990}%
\special{pa 3112 2316}%
\special{fp}%
\special{sh 1}%
\special{pa 3112 2316}%
\special{pa 3182 2310}%
\special{pa 3162 2296}%
\special{pa 3168 2272}%
\special{pa 3112 2316}%
\special{fp}%
}}%
\put(30.4700,-23.4600){\makebox(0,0){5}}%
%
{\color[named]{Black}{%
\special{pn 8}%
\special{pa 3950 2430}%
\special{pa 4510 1606}%
\special{fp}%
\special{sh 1}%
\special{pa 4510 1606}%
\special{pa 4456 1650}%
\special{pa 4480 1650}%
\special{pa 4488 1672}%
\special{pa 4510 1606}%
\special{fp}%
}}%
\put(45.3500,-15.2700){\makebox(0,0){1}}%
%
{\color[named]{Black}{%
\special{pn 8}%
\special{pa 3912 1956}%
\special{pa 3708 1478}%
\special{fp}%
\special{sh 1}%
\special{pa 3708 1478}%
\special{pa 3716 1548}%
\special{pa 3728 1528}%
\special{pa 3752 1532}%
\special{pa 3708 1478}%
\special{fp}%
}}%
%
{\color[named]{Black}{%
\special{pn 8}%
\special{pa 3938 2018}%
\special{pa 4142 2496}%
\special{fp}%
}}%
\put(36.7100,-14.2300){\makebox(0,0){2}}%
%
{\color[named]{Black}{%
\special{pn 8}%
\special{pa 4608 1878}%
\special{pa 3404 1580}%
\special{fp}%
\special{sh 1}%
\special{pa 3404 1580}%
\special{pa 3464 1614}%
\special{pa 3456 1592}%
\special{pa 3474 1576}%
\special{pa 3404 1580}%
\special{fp}%
}}%
\put(33.3300,-15.5300){\makebox(0,0){3}}%
%
{\color[named]{Black}{%
\special{pn 8}%
\special{pa 4302 2314}%
\special{pa 3128 2106}%
\special{fp}%
\special{sh 1}%
\special{pa 3128 2106}%
\special{pa 3190 2138}%
\special{pa 3182 2116}%
\special{pa 3198 2098}%
\special{pa 3128 2106}%
\special{fp}%
}}%
\put(30.5000,-20.8500){\makebox(0,0){4}}%
%
{\color[named]{Black}{%
\special{pn 8}%
\special{pa 2268 2248}%
\special{pa 1064 1950}%
\special{fp}%
\special{sh 1}%
\special{pa 1064 1950}%
\special{pa 1124 1984}%
\special{pa 1116 1962}%
\special{pa 1134 1946}%
\special{pa 1064 1950}%
\special{fp}%
}}%
\put(9.9300,-19.2300){\makebox(0,0){3}}%
%
{\color[named]{Black}{%
\special{pn 8}%
\special{pa 2482 1780}%
\special{pa 1308 1574}%
\special{fp}%
\special{sh 1}%
\special{pa 1308 1574}%
\special{pa 1370 1604}%
\special{pa 1362 1582}%
\special{pa 1378 1566}%
\special{pa 1308 1574}%
\special{fp}%
}}%
\put(12.3000,-15.5200){\makebox(0,0){4}}%
%
{\color[named]{Black}{%
\special{pn 8}%
\special{pa 1228 2294}%
\special{pa 1786 1468}%
\special{fp}%
\special{sh 1}%
\special{pa 1786 1468}%
\special{pa 1732 1512}%
\special{pa 1756 1512}%
\special{pa 1766 1534}%
\special{pa 1786 1468}%
\special{fp}%
}}%
\put(18.1200,-13.9000){\makebox(0,0){1}}%
%
{\color[named]{Black}{%
\special{pn 8}%
\special{pa 1994 2412}%
\special{pa 1584 1456}%
\special{fp}%
\special{sh 1}%
\special{pa 1584 1456}%
\special{pa 1592 1524}%
\special{pa 1606 1504}%
\special{pa 1630 1508}%
\special{pa 1584 1456}%
\special{fp}%
}}%
\put(15.5200,-13.7700){\makebox(0,0){2}}%
%
{\color[named]{Black}{%
\special{pn 8}%
\special{pa 2398 1638}%
\special{pa 1800 1874}%
\special{fp}%
}}%
%
{\color[named]{Black}{%
\special{pn 8}%
\special{pa 1722 1908}%
\special{pa 1012 2198}%
\special{fp}%
\special{sh 1}%
\special{pa 1012 2198}%
\special{pa 1082 2190}%
\special{pa 1062 2178}%
\special{pa 1066 2154}%
\special{pa 1012 2198}%
\special{fp}%
}}%
\put(9.3400,-22.2900){\makebox(0,0){5}}%
%
{\color[named]{Black}{%
\special{pn 8}%
\special{pa 2346 1462}%
\special{pa 1656 2482}%
\special{fp}%
\special{sh 1}%
\special{pa 1656 2482}%
\special{pa 1710 2438}%
\special{pa 1686 2438}%
\special{pa 1678 2416}%
\special{pa 1656 2482}%
\special{fp}%
}}%
\put(16.1700,-25.4700){\makebox(0,0){6}}%
\put(27.3500,-20.0800){\makebox(0,0){$=$}}%
\end{picture}}%

The ${\mathscr L}$ satisfies the tetrahedron equation of $RLLL$ type 
\cite{BS}
\begin{align}\label{te2}
{\mathscr L}_{1,2,4}{\mathscr L}_{1,3,5}{\mathscr L}_{2,3,6}\Rm_{4,5,6}=
\Rm_{4,5,6}{\mathscr L}_{2,3,6}{\mathscr L}_{1,3,5}{\mathscr L}_{1,2,4},
\end{align}
which is an equality in 
$\mathrm{End}({V}^{\otimes 3}\otimes F^{\otimes 3})$.
The indices are assigned according to the same rule as in (\ref{te1}).
By denoting the $V$ at the $i$ th component by a dotted arrow with $i$, 
(\ref{te2}) is depicted as follows:

{\unitlength 0.1in
\begin{picture}(  6.8900, 16.5)(6,-28.5)
%
{\color[named]{Black}{%
\special{pn 8}%
\special{pa 4320 1504}%
\special{pa 3546 2652}%
\special{fp}%
\special{sh 1}%
\special{pa 3546 2652}%
\special{pa 3600 2608}%
\special{pa 3576 2608}%
\special{pa 3566 2586}%
\special{pa 3546 2652}%
\special{fp}%
}}%
\put(29.6700,-21.0300){\makebox(0,0){$=$}}%
\put(17.1000,-27.0900){\makebox(0,0){6}}%
%
{\color[named]{Black}{%
\special{pn 8}%
\special{pa 2530 1490}%
\special{pa 1754 2638}%
\special{fp}%
\special{sh 1}%
\special{pa 1754 2638}%
\special{pa 1808 2594}%
\special{pa 1784 2594}%
\special{pa 1776 2572}%
\special{pa 1754 2638}%
\special{fp}%
}}%
\put(9.4100,-23.5200){\makebox(0,0){5}}%
%
{\color[named]{Black}{%
\special{pn 8}%
\special{pa 1828 1990}%
\special{pa 1030 2316}%
\special{fp}%
\special{sh 1}%
\special{pa 1030 2316}%
\special{pa 1098 2310}%
\special{pa 1078 2296}%
\special{pa 1084 2272}%
\special{pa 1030 2316}%
\special{fp}%
}}%
%
{\color[named]{Black}{%
\special{pn 8}%
\special{pa 2588 1686}%
\special{pa 1914 1952}%
\special{fp}%
}}%
\put(16.3700,-13.9400){\makebox(0,0){2}}%
%
{\color[named]{Black}{%
\special{pn 13}%
\special{pa 2136 2542}%
\special{pa 1674 1466}%
\special{dt 0.045}%
\special{sh 1}%
\special{pa 1674 1466}%
\special{pa 1682 1536}%
\special{pa 1694 1516}%
\special{pa 1718 1520}%
\special{pa 1674 1466}%
\special{fp}%
}}%
\put(19.2900,-14.0800){\makebox(0,0){1}}%
%
{\color[named]{Black}{%
\special{pn 13}%
\special{pa 1272 2426}%
\special{pa 1900 1496}%
\special{dt 0.045}%
\special{sh 1}%
\special{pa 1900 1496}%
\special{pa 1846 1540}%
\special{pa 1870 1540}%
\special{pa 1880 1562}%
\special{pa 1900 1496}%
\special{fp}%
}}%
\put(12.7500,-15.9000){\makebox(0,0){4}}%
%
{\color[named]{Black}{%
\special{pn 8}%
\special{pa 2682 1848}%
\special{pa 1362 1614}%
\special{fp}%
\special{sh 1}%
\special{pa 1362 1614}%
\special{pa 1424 1646}%
\special{pa 1416 1624}%
\special{pa 1432 1606}%
\special{pa 1362 1614}%
\special{fp}%
}}%
\put(10.0800,-20.0700){\makebox(0,0){3}}%
%
{\color[named]{Black}{%
\special{pn 13}%
\special{pa 2442 2374}%
\special{pa 1088 2038}%
\special{dt 0.045}%
\special{sh 1}%
\special{pa 1088 2038}%
\special{pa 1148 2072}%
\special{pa 1140 2050}%
\special{pa 1158 2034}%
\special{pa 1088 2038}%
\special{fp}%
}}%
\put(33.2200,-21.9000){\makebox(0,0){4}}%
%
{\color[named]{Black}{%
\special{pn 8}%
\special{pa 4730 2448}%
\special{pa 3410 2214}%
\special{fp}%
\special{sh 1}%
\special{pa 3410 2214}%
\special{pa 3472 2244}%
\special{pa 3464 2222}%
\special{pa 3480 2206}%
\special{pa 3410 2214}%
\special{fp}%
}}%
\put(36.4100,-15.9200){\makebox(0,0){3}}%
%
{\color[named]{Black}{%
\special{pn 13}%
\special{pa 5074 1958}%
\special{pa 3720 1620}%
\special{dt 0.045}%
\special{sh 1}%
\special{pa 3720 1620}%
\special{pa 3780 1656}%
\special{pa 3772 1634}%
\special{pa 3790 1618}%
\special{pa 3720 1620}%
\special{fp}%
}}%
\put(40.2000,-14.4500){\makebox(0,0){2}}%
%
{\color[named]{Black}{%
\special{pn 13}%
\special{pa 4320 2114}%
\special{pa 4550 2652}%
\special{dt 0.045}%
}}%
%
{\color[named]{Black}{%
\special{pn 13}%
\special{pa 4290 2046}%
\special{pa 4062 1508}%
\special{dt 0.045}%
\special{sh 1}%
\special{pa 4062 1508}%
\special{pa 4070 1576}%
\special{pa 4082 1556}%
\special{pa 4106 1562}%
\special{pa 4062 1508}%
\special{fp}%
}}%
\put(49.9200,-15.6200){\makebox(0,0){1}}%
%
{\color[named]{Black}{%
\special{pn 13}%
\special{pa 4336 2578}%
\special{pa 4964 1650}%
\special{dt 0.045}%
\special{sh 1}%
\special{pa 4964 1650}%
\special{pa 4910 1694}%
\special{pa 4934 1694}%
\special{pa 4944 1716}%
\special{pa 4964 1650}%
\special{fp}%
}}%
\put(33.1800,-24.8300){\makebox(0,0){5}}%
%
{\color[named]{Black}{%
\special{pn 8}%
\special{pa 4314 2082}%
\special{pa 3392 2448}%
\special{fp}%
\special{sh 1}%
\special{pa 3392 2448}%
\special{pa 3462 2442}%
\special{pa 3442 2428}%
\special{pa 3448 2406}%
\special{pa 3392 2448}%
\special{fp}%
}}%
%
{\color[named]{Black}{%
\special{pn 8}%
\special{pa 5088 1782}%
\special{pa 4312 2080}%
\special{fp}%
}}%
\put(35.0100,-27.2400){\makebox(0,0){6}}%
\end{picture}}%

Viewed as an equation on ${\mathscr R}$, 
(\ref{te2}) is equivalent to the 
intertwining relation for the irreducible representations of the 
quantized coordinate ring $A_q(sl_3)$ \cite{KV}, \cite[eq.(2.15)]{KO1}  
in the sense that the both lead to the same solution given in 
(\ref{Rex1})--(\ref{Rex3})
up to an overall  normalization.

One can concatenate the tetrahedron equations 
to form the $n$-layer versions mixing the two types 
(\ref{te1}) and (\ref{te2}) arbitrarily. 
To describe them we introduce the notation unifying 
$F, V$ and $\Rm, {\mathscr L}$.
\begin{align}
W^{(\epsilon)}= \begin{cases}
F,\\ V,
\end{cases}
{\mathscr S}^{(\epsilon)} = \begin{cases}
\Rm, \\ {\mathscr L},
\end{cases}
{\mathscr S}^{(\epsilon)\, a,b}_{\phantom{(\epsilon)}\, i,j}
=\begin{cases}
\Rm^{a,b}_{i,j}, \\ {\mathscr L}^{a,b}_{i,j},
\end{cases}
{\mathscr S}^{(\epsilon)\, a,b,c}_{\phantom{(\epsilon)}\, i,j,k}
=\begin{cases}
\Rm^{a,b,c}_{i,j,k} &\;(\epsilon=0),
\\ {\mathscr L}^{a,b,c}_{i,j,k}&\;(\epsilon=1).
\end{cases}\label{misto}
\end{align}
Note that
\begin{align}\label{Lins}
{\mathscr S}^{(\epsilon)\, a, b, c}_{\phantom{(\epsilon)}\,i, j, k} = 0\;\;
\text{unless}\;\; (a+b,b+c)=(i+j,j+k).
\end{align}
Now (\ref{te1}) and (\ref{te2}) are written as
\begin{align}\label{te3}
{\mathscr S}^{(\epsilon)}_{1,2,4}{\mathscr S}^{(\epsilon)}_{1,3,5}{\mathscr S}^{(\epsilon)}_{2,3,6}\Rm_{4,5,6}=
\Rm_{4,5,6}{\mathscr S}^{(\epsilon)}_{2,3,6}{\mathscr S}^{(\epsilon)}_{1,3,5}{\mathscr S}^{(\epsilon)}_{1,2,4}
\quad (\epsilon=0,1)
\end{align}
which is an equality in 
$\mathrm{End}(
W^{(\epsilon)} \otimes W^{(\epsilon)}\otimes W^{(\epsilon)}
\otimes F\otimes F \otimes F)$.

Let $n$ be a positive integer.
Given an arbitrary sequence 
$(\epsilon_1,\ldots, \epsilon_n) \in \{0,1\}^n$, we set 
\begin{align}\label{Wdef}
&\W= W^{(\epsilon_1)} \otimes \cdots \otimes W^{(\epsilon_n)}.
\end{align}
Let $\overset{\alpha_i}{W^{(\epsilon_i)}},
\overset{\beta_i}{W^{(\epsilon_i)}},
\overset{\gamma_i}{W^{(\epsilon_i)}}$ be copies of $W^{(\epsilon_i)}$,
where $\alpha_i, \beta_i$ and $\gamma_i\,(i=1,\ldots, n)$
are distinct labels.
Replacing the spaces $1,2,3$ by them in (\ref{te3})
we have 
\begin{align*}
{\mathscr S}^{(\epsilon_i)}_{\alpha_i, \beta_i, 4}
{\mathscr S}^{(\epsilon_i)}_{\alpha_i, \gamma_i, 5}
{\mathscr S}^{(\epsilon_i)}_{\beta_i, \gamma_i, 6}\Rm_{4,5,6}=
\Rm_{4,5,6}
{\mathscr S}^{(\epsilon_i)}_{\beta_i, \gamma_i, 6}
{\mathscr S}^{(\epsilon_i)}_{\alpha_i, \gamma_i, 5}
{\mathscr S}^{(\epsilon_i)}_{\alpha_i, \beta_i, 4}
\end{align*}
for each $i$.
Thus for any $i$ one can let $\Rm_{4,5,6}$ penetrate   
${\mathscr S}^{(\epsilon_i)}_{\alpha_i, \beta_i, 4}
{\mathscr S}^{(\epsilon_i)}_{\alpha_i, \gamma_i, 5}
{\mathscr S}^{(\epsilon_i)}_{\beta_i, \gamma_i, 6}$ to the left 
transforming it into the reverse order product
${\mathscr S}^{(\epsilon_i)}_{\beta_i, \gamma_i, 6}
{\mathscr S}^{(\epsilon_i)}_{\alpha_i, \gamma_i, 5}
{\mathscr S}^{(\epsilon_i)}_{\alpha_i, \beta_i, 4}$.
Repeating this $n$ times leads to
\begin{equation}\label{TEn}
\begin{split}
&\bigl({\mathscr S}^{(\epsilon_1)}_{\alpha_1, \beta_1, 4}
{\mathscr S}^{(\epsilon_1)}_{\alpha_1, \gamma_1, 5}
{\mathscr S}^{(\epsilon_1)}_{\beta_1, \gamma_1, 6}\bigr)
\cdots 
\bigl({\mathscr S}^{(\epsilon_n)}_{\alpha_n, \beta_n, 4}
{\mathscr S}^{(\epsilon_n)}_{\alpha_n, \gamma_n, 5}
{\mathscr S}^{(\epsilon_n)}_{\beta_n, \gamma_n, 6}\bigr)\Rm_{4,5,6}\\
&= 
\Rm_{4,5,6}
\bigl({\mathscr S}^{(\epsilon_1)}_{\beta_1, \gamma_1, 6}
{\mathscr S}^{(\epsilon_1)}_{\alpha_1, \gamma_1, 5}
{\mathscr S}^{(\epsilon_1)}_{\alpha_1, \beta_1, 4}
\bigr)
\cdots 
\bigl({\mathscr S}^{(\epsilon_n)}_{\beta_n, \gamma_n, 6}
{\mathscr S}^{(\epsilon_n)}_{\alpha_n, \gamma_n, 5}
{\mathscr S}^{(\epsilon_n)}_{\alpha_n, \beta_n, 4}
\bigr).
\end{split}
\end{equation}
This is an equality in 
$\mathrm{End}(\overset{\boldsymbol \alpha}{\W}\otimes 
\overset{\boldsymbol\beta}{\W}\otimes 
\overset{\boldsymbol\gamma}{\W}\otimes 
\overset{4}{F}\otimes 
\overset{5}{F}\otimes 
\overset{6}{F})$,
where 
${\boldsymbol\alpha}=(\alpha_1,\ldots, \alpha_n)$
is the array of labels and 
$\overset{\boldsymbol \alpha}{\W}= 
\overset{\alpha_1}{W^{(\epsilon_1)}}\otimes \cdots \otimes 
\overset{\alpha_n}{W^{(\epsilon_n)}}$.
The spaces
$\overset{\boldsymbol\beta}{\W}$ and $\overset{\boldsymbol\gamma}{\W}$
should be understood similarly.
They are just copies of $\W$ in (\ref{Wdef}).
The relation (\ref{TEn}) is depicted as follows:

{\unitlength 0.1in
\begin{picture}(14.1700,  14)(1,-35.7700)
%
{\color[named]{Black}{%
\special{pn 8}%
\special{pa 3028 3078}%
\special{pa 4446 2562}%
\special{fp}%
}}%
\put(25.1300,-29.2900){\makebox(0,0){$=$}}%
\put(3.4900,-33.0900){\makebox(0,0){5}}%
%
{\color[named]{Black}{%
\special{pn 8}%
\special{pa 2390 2582}%
\special{pa 2422 2572}%
\special{pa 2452 2564}%
\special{pa 2482 2552}%
\special{pa 2512 2542}%
\special{pa 2540 2528}%
\special{pa 2570 2514}%
\special{pa 2598 2498}%
\special{pa 2624 2482}%
\special{pa 2680 2446}%
\special{pa 2700 2434}%
\special{fp}%
}}%
%
{\color[named]{Black}{%
\special{pn 8}%
\special{pa 2352 2736}%
\special{pa 2384 2726}%
\special{pa 2416 2714}%
\special{pa 2446 2700}%
\special{pa 2474 2684}%
\special{pa 2498 2666}%
\special{pa 2520 2644}%
\special{pa 2536 2618}%
\special{pa 2552 2590}%
\special{pa 2564 2558}%
\special{pa 2586 2496}%
\special{pa 2596 2466}%
\special{pa 2608 2438}%
\special{pa 2612 2424}%
\special{fp}%
}}%
%
{\color[named]{Black}{%
\special{pn 8}%
\special{pa 2236 2472}%
\special{pa 2300 2464}%
\special{pa 2364 2460}%
\special{pa 2428 2460}%
\special{pa 2460 2464}%
\special{pa 2492 2470}%
\special{pa 2522 2478}%
\special{pa 2582 2502}%
\special{pa 2610 2516}%
\special{pa 2640 2532}%
\special{pa 2648 2536}%
\special{fp}%
}}%
\put(19.9800,-23.9400){\makebox(0,0)[rb]{$\beta_n$}}%
\put(10.3100,-27.4200){\makebox(0,0)[rb]{$\beta_1$}}%
\put(14.6300,-25.4300){\makebox(0,0)[rb]{$\beta_2$}}%
\put(10.5800,-28.0000){\makebox(0,0)[lb]{$\alpha_1$}}%
%
{\color[named]{Black}{%
\special{pn 8}%
\special{pa 1954 2736}%
\special{pa 1518 2894}%
\special{fp}%
}}%
%
{\color[named]{Black}{%
\special{pn 8}%
\special{pa 1450 2916}%
\special{pa 1038 3068}%
\special{fp}%
}}%
%
{\color[named]{Black}{%
\special{pn 8}%
\special{pa 2388 2588}%
\special{pa 2038 2716}%
\special{fp}%
}}%
%
{\color[named]{Black}{%
\special{pn 8}%
\special{pa 962 3096}%
\special{pa 410 3298}%
\special{fp}%
\special{sh 1}%
\special{pa 410 3298}%
\special{pa 480 3294}%
\special{pa 460 3280}%
\special{pa 466 3256}%
\special{pa 410 3298}%
\special{fp}%
}}%
%
{\color[named]{Black}{%
\special{pn 8}%
\special{pa 2352 2742}%
\special{pa 516 3412}%
\special{fp}%
\special{sh 1}%
\special{pa 516 3412}%
\special{pa 586 3408}%
\special{pa 566 3394}%
\special{pa 572 3370}%
\special{pa 516 3412}%
\special{fp}%
}}%
%
{\color[named]{Black}{%
\special{pn 8}%
\special{pa 2236 2478}%
\special{pa 402 3148}%
\special{fp}%
\special{sh 1}%
\special{pa 402 3148}%
\special{pa 470 3144}%
\special{pa 452 3130}%
\special{pa 458 3106}%
\special{pa 402 3148}%
\special{fp}%
}}%
%
{\color[named]{Black}{%
\special{pn 13}%
\special{pa 2082 2890}%
\special{pa 1726 2750}%
\special{da 0.070}%
\special{sh 1}%
\special{pa 1726 2750}%
\special{pa 1782 2792}%
\special{pa 1776 2770}%
\special{pa 1796 2756}%
\special{pa 1726 2750}%
\special{fp}%
}}%
%
{\color[named]{Black}{%
\special{pn 13}%
\special{pa 2014 2918}%
\special{pa 1974 2424}%
\special{da 0.070}%
\special{sh 1}%
\special{pa 1974 2424}%
\special{pa 1960 2492}%
\special{pa 1978 2478}%
\special{pa 2000 2490}%
\special{pa 1974 2424}%
\special{fp}%
}}%
%
{\color[named]{Black}{%
\special{pn 13}%
\special{pa 1772 2826}%
\special{pa 2068 2466}%
\special{da 0.070}%
\special{sh 1}%
\special{pa 2068 2466}%
\special{pa 2010 2504}%
\special{pa 2034 2506}%
\special{pa 2042 2530}%
\special{pa 2068 2466}%
\special{fp}%
}}%
%
{\color[named]{Black}{%
\special{pn 13}%
\special{pa 1572 3090}%
\special{pa 1182 2936}%
\special{da 0.070}%
\special{sh 1}%
\special{pa 1182 2936}%
\special{pa 1236 2980}%
\special{pa 1232 2956}%
\special{pa 1250 2942}%
\special{pa 1182 2936}%
\special{fp}%
}}%
%
{\color[named]{Black}{%
\special{pn 13}%
\special{pa 1498 3120}%
\special{pa 1454 2578}%
\special{da 0.070}%
\special{sh 1}%
\special{pa 1454 2578}%
\special{pa 1440 2646}%
\special{pa 1458 2630}%
\special{pa 1480 2642}%
\special{pa 1454 2578}%
\special{fp}%
}}%
%
{\color[named]{Black}{%
\special{pn 13}%
\special{pa 1232 3020}%
\special{pa 1558 2624}%
\special{da 0.070}%
\special{sh 1}%
\special{pa 1558 2624}%
\special{pa 1500 2662}%
\special{pa 1524 2664}%
\special{pa 1530 2688}%
\special{pa 1558 2624}%
\special{fp}%
}}%
%
{\color[named]{Black}{%
\special{pn 13}%
\special{pa 1084 3252}%
\special{pa 728 3110}%
\special{da 0.070}%
\special{sh 1}%
\special{pa 728 3110}%
\special{pa 784 3154}%
\special{pa 778 3130}%
\special{pa 798 3116}%
\special{pa 728 3110}%
\special{fp}%
}}%
%
{\color[named]{Black}{%
\special{pn 13}%
\special{pa 1016 3280}%
\special{pa 976 2786}%
\special{da 0.070}%
\special{sh 1}%
\special{pa 976 2786}%
\special{pa 962 2854}%
\special{pa 980 2838}%
\special{pa 1002 2850}%
\special{pa 976 2786}%
\special{fp}%
}}%
%
{\color[named]{Black}{%
\special{pn 13}%
\special{pa 774 3188}%
\special{pa 1070 2826}%
\special{da 0.070}%
\special{sh 1}%
\special{pa 1070 2826}%
\special{pa 1012 2866}%
\special{pa 1036 2868}%
\special{pa 1044 2890}%
\special{pa 1070 2826}%
\special{fp}%
}}%
\put(4.6500,-34.3200){\makebox(0,0){6}}%
\put(3.3000,-31.8700){\makebox(0,0){4}}%
\put(17.1400,-27.9400){\makebox(0,0)[rb]{$\gamma_n$}}%
\put(20.7500,-24.4000){\makebox(0,0)[lb]{$\alpha_n$}}%
\put(11.4700,-29.8700){\makebox(0,0)[rb]{$\gamma_2$}}%
\put(15.7300,-26.1300){\makebox(0,0)[lb]{$\alpha_2$}}%
\put(7.1600,-31.6000){\makebox(0,0)[rb]{$\gamma_1$}}%
\put(27.7700,-35.2800){\makebox(0,0){6}}%
\put(26.2200,-31.5400){\makebox(0,0){4}}%
\put(25.5100,-33.9200){\makebox(0,0){5}}%
\put(41.1000,-25.8800){\makebox(0,0)[rb]{$\gamma_n$}}%
\put(41.6800,-25.1000){\makebox(0,0)[lb]{$\beta_n$}}%
\put(44.7700,-26.2600){\makebox(0,0)[lb]{$\alpha_n$}}%
\put(34.5300,-28.4500){\makebox(0,0)[rb]{$\gamma_2$}}%
\put(38.1400,-28.7100){\makebox(0,0)[lb]{$\alpha_2$}}%
\put(35.1800,-27.6100){\makebox(0,0)[lb]{$\beta_2$}}%
\put(31.1200,-29.7400){\makebox(0,0)[rb]{$\gamma_1$}}%
\put(31.8900,-28.9600){\makebox(0,0)[lb]{$\beta_1$}}%
\put(34.4600,-29.9300){\makebox(0,0)[lb]{$\alpha_1$}}%
%
{\color[named]{Black}{%
\special{pn 8}%
\special{pa 2668 3336}%
\special{pa 2616 3380}%
\special{fp}%
\special{pa 2616 3380}%
\special{pa 2686 3384}%
\special{fp}%
}}%
%
{\color[named]{Black}{%
\special{pn 8}%
\special{pa 2770 3374}%
\special{pa 2790 3440}%
\special{fp}%
\special{pa 2790 3440}%
\special{pa 2822 3378}%
\special{fp}%
}}%
%
{\color[named]{Black}{%
\special{pn 8}%
\special{pa 2726 3206}%
\special{pa 2662 3176}%
\special{fp}%
\special{pa 2662 3176}%
\special{pa 2688 3240}%
\special{fp}%
}}%
%
{\color[named]{Black}{%
\special{pn 8}%
\special{pa 3086 3206}%
\special{pa 3056 3218}%
\special{pa 3028 3230}%
\special{pa 2938 3266}%
\special{pa 2908 3276}%
\special{pa 2878 3288}%
\special{pa 2788 3322}%
\special{pa 2698 3352}%
\special{pa 2666 3364}%
\special{pa 2636 3374}%
\special{pa 2616 3380}%
\special{fp}%
}}%
%
{\color[named]{Black}{%
\special{pn 8}%
\special{pa 3164 3336}%
\special{pa 3132 3340}%
\special{pa 3068 3350}%
\special{pa 3036 3354}%
\special{pa 2972 3354}%
\special{pa 2940 3352}%
\special{pa 2908 3346}%
\special{pa 2878 3338}%
\special{pa 2848 3328}%
\special{pa 2820 3314}%
\special{pa 2794 3296}%
\special{pa 2768 3276}%
\special{pa 2744 3256}%
\special{pa 2720 3234}%
\special{pa 2672 3186}%
\special{pa 2668 3180}%
\special{fp}%
}}%
%
{\color[named]{Black}{%
\special{pn 8}%
\special{pa 3022 3078}%
\special{pa 2966 3110}%
\special{pa 2940 3128}%
\special{pa 2914 3148}%
\special{pa 2892 3170}%
\special{pa 2872 3192}%
\special{pa 2856 3218}%
\special{pa 2840 3246}%
\special{pa 2828 3276}%
\special{pa 2818 3306}%
\special{pa 2808 3338}%
\special{pa 2800 3370}%
\special{pa 2786 3436}%
\special{pa 2784 3452}%
\special{fp}%
}}%
%
{\color[named]{Black}{%
\special{pn 13}%
\special{pa 4206 3006}%
\special{pa 4502 2646}%
\special{da 0.070}%
\special{sh 1}%
\special{pa 4502 2646}%
\special{pa 4444 2684}%
\special{pa 4468 2686}%
\special{pa 4476 2710}%
\special{pa 4502 2646}%
\special{fp}%
}}%
%
{\color[named]{Black}{%
\special{pn 13}%
\special{pa 4276 3008}%
\special{pa 4240 2544}%
\special{da 0.070}%
\special{sh 1}%
\special{pa 4240 2544}%
\special{pa 4224 2612}%
\special{pa 4244 2596}%
\special{pa 4264 2608}%
\special{pa 4240 2544}%
\special{fp}%
}}%
%
{\color[named]{Black}{%
\special{pn 13}%
\special{pa 4510 2742}%
\special{pa 4136 2594}%
\special{da 0.070}%
\special{sh 1}%
\special{pa 4136 2594}%
\special{pa 4192 2638}%
\special{pa 4186 2614}%
\special{pa 4206 2600}%
\special{pa 4136 2594}%
\special{fp}%
}}%
%
{\color[named]{Black}{%
\special{pn 13}%
\special{pa 3544 3252}%
\special{pa 3840 2890}%
\special{da 0.070}%
\special{sh 1}%
\special{pa 3840 2890}%
\special{pa 3782 2930}%
\special{pa 3806 2932}%
\special{pa 3812 2954}%
\special{pa 3840 2890}%
\special{fp}%
}}%
%
{\color[named]{Black}{%
\special{pn 13}%
\special{pa 3614 3252}%
\special{pa 3576 2788}%
\special{da 0.070}%
\special{sh 1}%
\special{pa 3576 2788}%
\special{pa 3562 2856}%
\special{pa 3580 2842}%
\special{pa 3602 2854}%
\special{pa 3576 2788}%
\special{fp}%
}}%
%
{\color[named]{Black}{%
\special{pn 13}%
\special{pa 3846 2988}%
\special{pa 3472 2840}%
\special{da 0.070}%
\special{sh 1}%
\special{pa 3472 2840}%
\special{pa 3528 2882}%
\special{pa 3522 2860}%
\special{pa 3542 2846}%
\special{pa 3472 2840}%
\special{fp}%
}}%
%
{\color[named]{Black}{%
\special{pn 13}%
\special{pa 3190 3374}%
\special{pa 3486 3012}%
\special{da 0.070}%
\special{sh 1}%
\special{pa 3486 3012}%
\special{pa 3428 3052}%
\special{pa 3452 3054}%
\special{pa 3458 3076}%
\special{pa 3486 3012}%
\special{fp}%
}}%
%
{\color[named]{Black}{%
\special{pn 13}%
\special{pa 3260 3376}%
\special{pa 3222 2910}%
\special{da 0.070}%
\special{sh 1}%
\special{pa 3222 2910}%
\special{pa 3208 2978}%
\special{pa 3226 2962}%
\special{pa 3248 2974}%
\special{pa 3222 2910}%
\special{fp}%
}}%
%
{\color[named]{Black}{%
\special{pn 13}%
\special{pa 3492 3110}%
\special{pa 3118 2962}%
\special{da 0.070}%
\special{sh 1}%
\special{pa 3118 2962}%
\special{pa 3174 3004}%
\special{pa 3168 2982}%
\special{pa 3188 2968}%
\special{pa 3118 2962}%
\special{fp}%
}}%
%
{\color[named]{Black}{%
\special{pn 8}%
\special{pa 3086 3206}%
\special{pa 4626 2648}%
\special{fp}%
}}%
%
{\color[named]{Black}{%
\special{pn 8}%
\special{pa 3158 3336}%
\special{pa 4574 2820}%
\special{fp}%
}}%
\end{picture}}%

\noindent
Here the broken arrows represent either 
solid or dotted arrows depending on whether 
the corresponding $\epsilon_i$ is 0 or 1.
The vertices on the $i$ th layer 
${\mathscr S}^{(\epsilon_i)}$
should also be understood as ${\mathscr R}$ or ${\mathscr L}$
accordingly.

\section{Solution to the Yang-Baxter equation}\label{sec:ybe}
One can reduce (\ref{TEn}) to the Yang-Baxter equation
involving spectral parameters.
In this paper we shall only consider the reduction by trace.
See \cite{KS, KOS} for another reduction by using boundary vectors.

Define ${\bf h} \in \mathrm{End}(F)$
by ${\bf h}|m\rangle = m |m\rangle$. 
By (\ref{Lins}), 
$[x^{{\bf h}_4+{\bf h}_5} y^{{\bf h}_5+{\bf h}_6}, \Rm_{4,5,6}]=0$ holds
for parameters $x$ and $y$,
where the indices specify the spaces on which the operators act nontrivially.
Multiply $\Rm^{-1}_{4,5,6} x^{{\bf h}_4+{\bf h}_5} y^{{\bf h}_5+{\bf h}_6}
= x^{{\bf h}_4+{\bf h}_5} y^{{\bf h}_5+{\bf h}_6}\Rm^{-1}_{4,5,6} $ 
from the left to (\ref{TEn}) and take the trace over the space 
$F^{\otimes 3}$ corresponding to $4,5,6$.
The result becomes the Yang-Baxter equation
\begin{align}\label{sybe}
S_{\boldsymbol{\alpha, \beta}}(x)
S_{\boldsymbol{\alpha, \gamma}}(xy)
S_{\boldsymbol{\beta,\gamma}}(y)
=
S_{\boldsymbol{\beta,\gamma}}(y)
S_{\boldsymbol{\alpha, \gamma}}(xy)
S_{\boldsymbol{\alpha, \beta}}(x)
\in \mathrm{End}(
\overset{\boldsymbol\alpha}{\W}\otimes
\overset{\boldsymbol\beta}{\W}\otimes
\overset{\boldsymbol\gamma}{\W})
\end{align}
for the matrix 
$S_{\boldsymbol{\alpha, \beta}}(z)
\in \mathrm{End}
(\overset{\boldsymbol\alpha}{\W}\otimes \overset{\boldsymbol\beta}{\W})$
constructed as 
\begin{align}
S_{\boldsymbol{\alpha, \beta}}(z)
= \mathrm{Tr}_3\left(z^{{\bf h}_3}
{\mathscr S}^{(\epsilon_1)}_{\alpha_1, \beta_1, 3}
\cdots
{\mathscr S}^{(\epsilon_n)}_{\alpha_n, \beta_n, 3}\right),
\label{sdef0}
\end{align}
where $3$ denotes a copy of $F$. 
To describe the matrix elements 
of $S_{\boldsymbol{\alpha, \beta}}(z)$
we write the basis of (\ref{Wdef}) as
\begin{align}
&\W = \bigoplus_{m_1,\ldots, m_n} \!\!\C
|m_1,\ldots, m_n\rangle, \qquad
|m_1,\ldots, m_n\rangle = |m_1\rangle^{(\epsilon_1)}
\otimes \cdots \otimes 
|m_n\rangle^{(\epsilon_n)},\label{syk}\\
&|m\rangle^{(0)} = |m\rangle \in F\;\;(m \in \Z_{\ge 0}),\qquad
|m\rangle^{(1)} = v_m \in V\; \; (m\in \{0,1\}).\label{kby}
\end{align}
The range of the indices $m_i$ are to be understood as 
$\Z_{\ge 0}$ or $\{0,1\}$ according to $\epsilon_i=0$ or $1$ as in 
(\ref{kby}). 
It will crudely be denoted by $0 \le m_i \le 1/\epsilon_i$.
We use the shorthand 
$|{\bf m}\rangle = |m_1,\ldots, m_n\rangle$
for ${\bf m}=(m_1,\ldots, m_n)$ and 
write (\ref{syk}) as $\W= \bigoplus_{\bf m} \C |{\bf m}\rangle$.
We set $|{\bf m}| = m_1+\cdots + m_n$.

Let $S(z) \in \mathrm{End}(\W \otimes \W)$
denote the solution (\ref{sdef0}) 
of the Yang-Baxter equation, 
where the inessential labels 
$\boldsymbol{\alpha}, \boldsymbol{\beta}$
are now suppressed\footnote{
The labels $\boldsymbol{\alpha, \beta}\ldots$ 
introduced for the exposition of (\ref{sybe})
will no longer be used in the rest of the paper, and 
should not be confused with the indices of
$S_{l,m}(z)$ in (\ref{wl}).}.
Remember, however,  that $S(z)$ depends on 
the choice $(\epsilon_1,\ldots, \epsilon_n) \in \{0,1\}^n$.
We write its action as
\begin{align}
&S(z)\bigl(|{\bf i}\rangle \otimes |{\bf j}\rangle\bigr)
= \sum_{{\bf a},{\bf b}}
S(z)^{{\bf a},{\bf b}}_{{\bf i},{\bf j}}
|{\bf a}\rangle \otimes |{\bf b}\rangle.\label{sact}
\end{align}
Then the matrix elements are given by
\begin{align}
S(z)^{{\bf a},{\bf b}}_{{\bf i},{\bf j}}
&= \mathrm{Tr}_F\left( z^{{\bf h}}\,
{\mathscr S}^{(\epsilon_1) a_1, b_1}_{
\phantom{(\epsilon_1)}\,i_1, j_1}
\cdots 
{\mathscr S}^{(\epsilon_n) a_n, b_n}_{
\phantom{(\epsilon_1)}\,i_n, j_n}
\right)\label{sss}\\
&= \sum_{c_0, \ldots, c_{n-1}}\!\!
z^{c_0}
{\mathscr S}^{(\epsilon_1)\, a_1, b_1, c_0}_{
\phantom{(\epsilon_1)}\,i_1, j_1, c_1}
{\mathscr S}^{(\epsilon_2)\, a_2, b_2, c_1}_{
\phantom{(\epsilon_2)}\,i_2, j_2, c_2}
\cdots
{\mathscr S}^{(\epsilon_n)\, a_n, b_n, c_{n\!-\!1}}_{
\phantom{(\epsilon_n)}i_n, j_n, c_0}.\label{sabij0}
\end{align} 
The operators in (\ref{sss}) are defined by (\ref{misto}), 
(\ref{lak}) and (\ref{ropp}).
From (\ref{Lins}) it follows that 
\begin{align}
S(z)^{{\bf a},{\bf b}}_{{\bf i},{\bf j}}= 0 \;\;
\text{unless}\;\; \;{\bf a} + {\bf b} = {\bf i} + {\bf j} \;\;
\text{and}\;\;  |{\bf a}| = |{\bf i}|,\;
|{\bf b}| = |{\bf j}|.\label{iceA}
\end{align}
Given such 
${\bf a},{\bf b},{\bf i}$ and ${\bf j}$, (\ref{Lins}) further reduces 
the sums over $c_i \in \Z_{\ge 0}$ in (\ref{sabij0}) 
effectively into a {\em single} sum.
The latter property in (\ref{iceA}) implies the 
direct sum decomposition:
\begin{align}
S(z) = \bigoplus_{l,m \ge 0}  S_{l,m}(z),\;
S_{l,m}(z) \in \mathrm{End}(\W_l \otimes \W_m),\;
\W_l = \bigoplus_{{\bf m}, |{\bf m}|=l}
\!\!\!\C|{\bf m}\rangle \subset \W,
\label{wl}
\end{align}
where the former sum ranges over $0 \le l, m\le n$ if 
$\epsilon_1\cdots \epsilon_n = 1$ and 
$l,m \in \Z_{\ge 0}$ otherwise. 
The formula (\ref{sss}) is depicted as
\[
\begin{picture}(200,80)(-130,-40)

\put(3,1){\vector(-3,-1){82}}
\put(-70,-25){$\bullet$}
\put(-75,-35){$z^{{\bf h}}$}

\put(-160,-10){$S(z)^{{\bf a},{\bf b}}_{{\bf i},{\bf j}} = $}

\put(-105,-27){$\mathrm{Tr}_F\Biggl($}
\put(69,19){$\Biggr).$}

\multiput(-48,-32)(0,6){6}{\put(0,0){\line(0,1){4}}}
\multiput(-31,-22)(-6,2){6}{\put(0,0){\line(-3,1){3}}}

\put(-51,6){$\scriptstyle{b_1}$}\put(-48,3){\vector(0,1){1}}
\put(-74,-10){$\scriptstyle{i_1}$}
\put(-30,-29){$\scriptstyle{a_1}$}\put(-29,-23){\vector(3,-1){1}}
\put(-50,-39){$\scriptstyle{j_1}$}

\multiput(-15,-18)(0,6){5}{\put(0,0){\line(0,1){4}}}
\multiput(1,-10)(-6,2){5}{\put(0,0){\line(-3,1){3}}}

\put(-36,0){$\scriptstyle{i_2}$}
\put(-18,17){$\scriptstyle{b_2}$}\put(-15,13){\vector(0,1){1}}
\put(-17,-25){$\scriptstyle{j_2}$}
\put(4,-16){$\scriptstyle{a_2}$}\put(4,-11){\vector(3,-1){1}}

\multiput(5.1,1.7)(3,1){7}{.} 
\put(6,2){
\put(21,7){\line(3,1){33}}
\put(58,17){$\scriptstyle{F}$}

\multiput(35,1)(0,6){4}{\put(0,0){\line(0,1){4}}}
\multiput(46,8)(-6,2){4}{\put(0,0){\line(-3,1){3}}}

\put(15,16){$\scriptstyle{i_n}$}
\put(51,3){$\scriptstyle{a_n}$}\put(49,7){\vector(3,-1){1}}
\put(33,29){$\scriptstyle{b_n}$}\put(35,26){\vector(0,1){1}}
\put(34,-7){$\scriptstyle{j_n}$}

}
\end{picture}
\]
Here the broken arrows represent either 
solid or dotted arrows according to 
$\epsilon_i = 0$ or $1$ at the corresponding site.
Thus (\ref{sss}) is a matrix product construction of 
$S(z)$ in terms of 3D $R$ and 3D $L$ with the auxiliary space $F$.

\begin{example}\label{ex:yum}
Take $n=3$ and $(\epsilon_1,\epsilon_2,\epsilon_3)=(1,0,1)$. 
Then one has
\begin{align*}
S(z)(|031\rangle \otimes |110\rangle) &=
S^{031,110}_{031,110}(z) |031\rangle \otimes |110\rangle
+S^{040,101}_{031,110}(z) |040\rangle \otimes |101\rangle\\
&+S^{121,020}_{031,110}(z) |121\rangle \otimes |020\rangle
+S^{130,011}_{031,110}(z) |130\rangle \otimes |011\rangle,
\end{align*}
where the matrix elements are expressed as
\begin{align*}
S^{031,110}_{031,110}(z)&= \mathrm{Tr}(z^{\bf h}
{\mathscr L}^{0,1}_{0,1}\Rm^{3,1}_{3,1}{\mathscr L}^{1,0}_{1,0}),
\quad
S^{040,101}_{031,110}(z)= \mathrm{Tr}(z^{\bf h}
{\mathscr L}^{0,1}_{0,1}\Rm^{4,0}_{3,1}{\mathscr L}^{0,1}_{1,0}),\\
S^{121,020}_{031,110}(z)&= \mathrm{Tr}(z^{\bf h}
{\mathscr L}^{1,0}_{0,1}\Rm^{2,2}_{3,1}{\mathscr L}^{1,0}_{1,0}),
\quad
S^{130,011}_{031,110}(z)= \mathrm{Tr}(z^{\bf h}
{\mathscr L}^{1,0}_{0,1}\Rm^{3,1}_{3,1}{\mathscr L}^{0,1}_{1,0}).
\end{align*}
Using ${\mathscr L}^{a,b}_{i,j}$  (\ref{lak}) and 
$\Rm^{a,b}_{i,j}$ in Example \ref{ex:skrb}, 
one calculates them for instance as
\begin{align*}
S^{040,101}_{031,110}(z)&= \mathrm{Tr}\bigl(z^{\bf h}
(-q{\bf k}){\bf a}^+ {\bf k}^3{\bf a}^-\bigr)
= -q^{-2}\mathrm{Tr}\bigl({\bf k}^4z^{\bf h}{\bf a}^+ {\bf a}^-\bigr)\\
&= -q^{-2}\sum_{m \ge 0}(q^4z)^m(1-q^{2m})
=\frac{-q^2(1-q^2)z}{(1-q^4z)(1-q^6z)}.
\end{align*}
Similar calculations lead to 
\begin{align*}
S^{031,110}_{031,110}(z)&=
\frac{q^3(q^2-z)}{(1-q^4z)(1-q^6z)},\qquad
S^{121,020}_{031,110}(z)=
\frac{-q^2 (1 - q^6) (q^2 - z) z}{(1 - q^2 z) (1 - q^4 z) (1 - q^6 z)},\\
S^{130,011}_{031,110}(z)&=
\frac{-(1 - q^2) z (q^4 - z - q^2 z + q^8 z)}
{(1 - q^2 z) (1 - q^4 z) (1 - q^6 z)}.
\end{align*}
In general $S(z)^{{\bf a},{\bf b}}_{{\bf i},{\bf j}}$ is a {\em rational} function of 
$q$ and $z$. 
\end{example}

\begin{example}\label{ex:mho}
For $0\le a,b,i,j \le 1$, 
$\Rm^{a,b}_{i,j}$ (\ref{ropp}) and ${\mathscr L}^{a,b}_{i,j}$ (\ref{lak}) 
are the same except 
$\Rm^{1,1}_{1,1} = {\bf a}^- {\bf a}^+ - {\bf k}^2$ and 
${\mathscr L}^{1,1}_{1,1} = 1$.
This implies that $S(z)^{{\bf a},{\bf b}}_{{\bf i},{\bf j}}$ 
with $(a_\alpha, b_\alpha, i_\alpha, j_\alpha)=(1,1,1,1)$ 
depends on $\epsilon_\alpha=0,1$.
The following table shows such examples, in which 
the case $(\epsilon_1, \epsilon_2, \epsilon_3, \epsilon_4)
=(0,0,0,0)$ is omitted since the expression is too bulky.

\begin{table}[h]
\begin{tabular}{c|c|c|c}
$(\epsilon_1, \epsilon_2, \epsilon_3, \epsilon_4)$ & (0,1,0,1) & (0,1,0,0) & (0,0,0,1)\\
\hline
$S^{1111,0111}_{0121,1101}(z)$ \!\!\!\phantom{$\overset{s}{1}$}& 
$\frac{(1-q^4)z}{(1-qz)(1-q^3z)}$ & 
$\frac{(1-q^4)z(1-q^2-q^4+q^3z)}{(1-qz)(1-q^3z)(1-q^5z)}$ &
$-\frac{q(1-q^4)z(q-z-q^2z+q^4z)}{(1-qz)(1-q^3z)(1-q^5z)}$
\end{tabular}
\end{table}
\end{example}

\section{Generalized quantum group symmetry}\label{sec:qg}

The $S(z)$ constructed in the previous section possesses the  
generalized quantum group symmetry.
Recall that 
$(\epsilon_1,\ldots, \epsilon_n) \in \{0,1\}^n$ is an arbitrary sequence.
Set
\begin{align}\label{qidef}
q_i =(-1)^{\epsilon_i}q^{1-2\epsilon_i},\quad
D_{i,j}&=
\prod_{k\in \{i,i+1\}\cap \{j,j+1\}}(q_k)^{2\delta_{i,j}-1}
\quad (i,j \in \Z_n).
\end{align}

We introduce the $\C(q)$-algebra 
${\mathcal U}_A={\mathcal U}_A(\epsilon_1,\ldots, \epsilon_n)$ 
generated by $e_i, f_i, k^{\pm 1}_i\,(i \in \Z_n)$ obeying the relations
\begin{equation}\label{urel}
\begin{split}
&k_i k^{-1}_i = k^{-1}_ik_i = 1,\quad [k_i, k_j]=0,\\
&k_i e_j = D_{i,j}e_j k_i,\quad k_i f_j = D_{i,j}^{-1}f_j k_i,\quad
[e_i,f_j] = \delta_{i,j}\frac{k_i-k^{-1}_i}{q-q^{-1}}.
\end{split}
\end{equation}
We endow it with the Hopf algebra structure  
with coproduct $\Delta$, counit $\varepsilon$ and 
antipode ${\mathcal S}$ as follows:
\begin{align}
&\Delta k^{\pm 1}_i = k^{\pm 1}_i\otimes k^{\pm 1}_i,\quad
\Delta e_i = 1\otimes e_i + e_i \otimes k_i,\quad
\Delta f_i = f_i\otimes 1 + k^{-1}_i\otimes f_i, \label{Delta}\\
&\varepsilon(k_i) = 1, \quad \varepsilon(e_i) = \varepsilon(f_i) = 0,\quad
{\mathcal S}(k^{\pm 1}_i) = k_i^{\mp 1},\quad
{\mathcal S}(e_i)=-e_ik^{-1}_i, \quad {\mathcal S}(f_i) =  -k_if_i. \nonumber
\end{align}
With a supplement of appropriate Serre relations, 
the homogeneous cases $\epsilon_1=\cdots = \epsilon_n$
are identified with the quantum affine algebras 
\cite{D1,Ji} as
\begin{equation}\label{equiv}
{\mathcal U}_A(0,\ldots, 0) = U_q(A^{(1)}_{n-1}),\quad
{\mathcal U}_A(1,\ldots, 1) = U_{-q^{-1}}(A^{(1)}_{n-1}).
\end{equation}
In general ${\mathcal U}_A(\epsilon_1,\ldots, \epsilon_n)$ 
is an example of generalized quantum groups \cite{H,HY}
including an affinization of quantum super algebra
$sl_q(\kappa,n-\kappa)$.
See \cite[Sec.3.3]{KOS} for more detail.

For the space $\W_l$ (\ref{wl}) and 
a parameter $x$, the following map
$\pi^{(l)}_x: {\mathcal U}_A(\epsilon_1,\ldots, \epsilon_n)
 \rightarrow \mathrm{End}(\W_{l})$ 
gives an irreducible finite dimensional representation\footnote{
Image $\pi^{(l)}_x(g)$ is denoted by $g$ for simplicity.} 
\begin{equation}\label{actsA}
\begin{split}
e_i|{\bf m}\rangle&
= x^{\delta_{i,0}}[m_i]|{\bf m}-{\bf e}_i+{\bf e}_{i+1}\rangle,\\
f_i|{\bf m}\rangle&
= x^{-\delta_{i,0}}[m_{i+1}]|{\bf m}+{\bf e}_i-{\bf e}_{i+1}\rangle,\\
k_i|{\bf m}\rangle&
= (q_i)^{-m_i}(q_{i+1})^{m_{i+1}}|{\bf m}\rangle,
\end{split}
\end{equation}
where 
$[m]= \frac{q^m-q^{-m}}{q-q^{-1}}$ and 
${\bf e}_i = (0,\ldots, 0,\overset{i}{1},0,\ldots, 0) \in \Z^n$.
The vectors $|{\bf m}'\rangle=|m'_1,\ldots, m'_n\rangle$ 
on the rhs of (\ref{actsA}) are to be understood as zero
unless $0 \le m'_i \le 1/\epsilon_i$ for all $1 \le i \le n$.
In the homogeneous case, the representation 
$\pi^{(l)}_x$ is equivalent to
\begin{align*}
\text{degree $l$ symmetric tensor rep. of }   
U_q(A^{(1)}_{n-1})\;\;
\text{for}\;\;\epsilon_1=\cdots = \epsilon_n=0,\\
\text{degree $l$ anti-symmetric tensor rep. of }   
U_{-q^{-1}}(A^{(1)}_{n-1}) \;\;
\text{for}\;\;\epsilon_1=\cdots = \epsilon_n=1.
\end{align*}

Let $\Delta'$ denote the opposite 
(i.e., the left and the right components interchanged) 
coproduct of $\Delta$ in (\ref{Delta}).

\begin{theorem}\label{th:1} 
$\mathrm{(}$\cite[Th.5.1]{KOS}$\mathrm{)}$
For any $l,m \in \Z_{\ge 0}$,
the following commutativity holds:
\begin{align*}
\Delta'(g)S_{l,m}(z)= S_{l,m}(z)\Delta(g)\quad 
\; \forall g \in {\mathcal U}_A(\epsilon_1,\ldots, \epsilon_n),
\end{align*} 
where $\Delta(g)$ and $\Delta'(g)$ stand for the tensor product
representations 
$(\pi^{(l)}_x \otimes \pi^{(m)}_y)\Delta(g)$ 
and $(\pi^{(l)}_x \otimes \pi^{(m)}_y)\Delta'(g)$ of 
(\ref{actsA}) with $z=x/y$.
\end{theorem}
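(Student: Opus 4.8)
The plan is to deduce the intertwining relation of Theorem \ref{th:1} from the $n$-layer tetrahedron equation (\ref{TEn}) by the same trace reduction that produced the Yang-Baxter equation (\ref{sybe}), but now applied in a way that isolates the action of each generator $e_i,f_i,k_i^{\pm1}$. First I would recall that the operators $\mathscr{S}^{(\epsilon_i)\,a,b}_{\phantom{(\epsilon)}i,j}$ acting on the auxiliary space $F$ are built from the $q$-oscillators ${\bf a}^\pm,{\bf k}$ via (\ref{ropp}) and (\ref{lak}), and that by (\ref{sss}) the matrix $S_{l,m}(z)$ is the partial trace over $F$ of the ordered product $\mathscr{S}^{(\epsilon_1)}_{\alpha_1,\beta_1,3}\cdots\mathscr{S}^{(\epsilon_n)}_{\alpha_n,\beta_n,3}$ twisted by $z^{{\bf h}}$. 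So the claim is ultimately a statement about how the coproduct action (\ref{actsA}), (\ref{Delta}) commutes past this matrix product construction.

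The key step is a local commutation lemma: for each $i\in\Z_n$ one shows that the single 3D operator $\mathscr{S}^{(\epsilon_i)}_{\alpha_i,\beta_i,3}$ intertwines appropriate ``elementary'' operators acting on $\overset{\alpha_i}{W^{(\epsilon_i)}}\otimes\overset{\beta_i}{W^{(\epsilon_i)}}\otimes F$, where the third-component dressing involves the shift operators ${\bf a}^\pm$ and the grading ${\bf k},{\bf h}$. Concretely, using the $q$-oscillator relations (\ref{ac2}) one verifies relations of the schematic form $\mathscr{S}^{(\epsilon_i)}\circ(\text{raising on }\alpha_i\otimes\text{something on }F)=(\text{something on }F\otimes\text{raising on }\beta_i)\circ\mathscr{S}^{(\epsilon_i)}$, telescoping the generator across one layer at the cost of transferring a $q$-oscillator factor to the neighbouring layer. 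Chaining these $n$ local relations along the product in (\ref{sss}) moves the generator from the $\boldsymbol\alpha$ side to the $\boldsymbol\beta$ side — this is exactly the passage from $\Delta$ to $\Delta'$ — while the accumulated $q$-oscillator operators on $F$ close up into a conjugation that is killed upon taking the trace (cyclicity), together with the twist $z^{{\bf h}}$ supplying the spectral-parameter-dependent factors $x^{\pm\delta_{i,0}}$ in (\ref{actsA}) precisely because the trace wraps the $n$-th layer back to the first. For the Cartan generators $k_i$ the verification is immediate from (\ref{Lins})/(\ref{Wdef}): $k_i$ acts diagonally and the weight $\sum(-m_i\log q_i+m_{i+1}\log q_{i+1})$ is conserved layer by layer since $(a+b,b+c)=(i+j,j+k)$, so $\Delta(k_i)$ and $\Delta'(k_i)$ already agree as diagonal operators commuting with $S_{l,m}(z)$. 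It then suffices to treat $e_i$ and $f_i$, and by the symmetry $\mathscr{R}=\mathscr{R}^{-1}$, $\mathscr{R}^{a,b,c}_{i,j,k}=\mathscr{R}^{c,b,a}_{k,j,i}$ in (\ref{rtb}) one can reduce the $f_i$ case to the $e_i$ case, or handle them in parallel.

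The main obstacle I expect is the boundary/cyclic bookkeeping at $i=0$: unlike the interior generators, $e_0$ and $f_0$ connect sites $n$ and $1$, so the telescoping must be executed around the trace rather than inside a linear product, and one must check carefully that the residual $q$-oscillator operator one is left with on $F$ is conjugate (not merely equal up to scalar) to the identity, so that $\operatorname{Tr}_F(z^{{\bf h}}\cdots)$ absorbs it and produces the correct power of $z=x/y$. A clean way to organize this is to first prove the statement for the ``unreduced'' object, i.e.\ to exhibit the intertwining relation for the product $\mathscr{S}^{(\epsilon_1)}_{\alpha_1,\beta_1,3}\cdots\mathscr{S}^{(\epsilon_n)}_{\alpha_n,\beta_n,3}$ as an operator-valued identity on $\overset{\boldsymbol\alpha}{\W}\otimes\overset{\boldsymbol\beta}{\W}\otimes F$ with explicit $F$-side conjugating factors, and only then take $\operatorname{Tr}_3(z^{{\bf h}}\,-\,)$; the invariance of the trace under the conjugation closes the argument. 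Since the full verification is a matter of matching the explicit formulas (\ref{ropp}), (\ref{lak}) against (\ref{actsA}) generator by generator, I would present the local lemma with its $q$-oscillator computation in detail and then state that the chaining and the trace step are routine, referring to \cite[Th.5.1]{KOS} for the complete treatment.
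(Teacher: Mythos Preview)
The paper does not give its own proof of Theorem~\ref{th:1}: the statement is quoted with the attribution ``(\cite[Th.5.1]{KOS})'' and no argument follows. So there is nothing in this paper to compare your proposal against.

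That said, your outline is the natural strategy for this kind of matrix-product intertwiner and is essentially what one expects the cited proof in \cite{KOS} to do: establish local exchange relations between each single-layer operator $\mathscr{S}^{(\epsilon_i)}$ and suitable $q$-oscillator-dressed raising/lowering operators, telescope through the $n$-fold product, and close the boundary with trace cyclicity and the twist $z^{\bf h}$. Your identification of the $i=0$ case as the delicate point is accurate, and your plan to first prove an operator identity on $\overset{\boldsymbol\alpha}{\W}\otimes\overset{\boldsymbol\beta}{\W}\otimes F$ before tracing is the clean way to handle it. Since you already conclude by deferring to \cite[Th.5.1]{KOS} for the details, your write-up is consistent with the paper's own treatment, which is simply to cite that reference.
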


If $\W_l \otimes \W_m$ is irreducible, Theorem \ref{th:1}
characterizes $S_{l,m}(z)$ up to an overall scalar.
Therefore $S_{l,m}(z)$ is identified with the {\em quantum $R$ matrix}
in the sense of \cite{Ji} associated with 
${\mathcal U}_A(\epsilon_1,\ldots, \epsilon_n)$-module $\W_l \otimes \W_m$.
Although we expect that $\W_l \otimes \W_m$ is irreducible 
for arbitrary $(\epsilon_1,\ldots, \epsilon_n)$, 
it has hitherto been proved rigorously only for   
$(\epsilon_1,\ldots, \epsilon_n)$ of the form $(1^\kappa,0^{n-\kappa})$
with $0 \le \kappa \le n$ \cite{KOS}.
Anyway the family $S_{l,m}(z)$ (\ref{sss}) interpolates
the quantum $R$ matrices for 
the symmetric tensor representations of $U_q(A^{(1)}_{n-1})$ and the
anti-symmetric tensor representations of $U_{-q^{-1}}(A^{(1)}_{n-1})$
as the two extreme cases $\kappa=0$ and $n$. 
In \cite[Prop.2.1]{KOS}, it was also shown that 
$S_{l,m}(z)$'s associated with $(\epsilon_1,\ldots, \epsilon_n)$
and $(\epsilon'_1,\ldots, \epsilon'_n)$ are connected by a 
similarity transformation if 
the two sequences are permutations of each other.
Thus one can claim that {\em all} 
the $S_{l,m}(z)$ (\ref{sss}) are equivalent to the 
quantum $R$ matrices of some generalized quantum group.

\section{Combinatorial $R$}\label{sec:cr}
In this section we study $S_{l,m}(z)$ (\ref{wl}) at $q=0$.
Let $(\epsilon_1,\ldots, \epsilon_n) \in \{0,1\}^n$ be an arbitrary sequence
and introduce the {\em crystal}
\begin{align}
B_l = \{{\bf a}=(a_1,\ldots a_n) \in (\Z_{\ge 0})^n\mid 
|{\bf a}| = l,\;0 \le a_i \le 1/\epsilon_i\,(1 \le i \le n)\},
\end{align}
which is a finite labeling set of the basis of $\W_l$ (\ref{wl}).
We identify ${\bf a}=(a_1,\ldots, a_n) \in B_l$
with the depth $n$ column shape tableau
containing $a_i$ dots in the $i$ th box from the top $(1 \le i \le n)$.
See the diagrams given below.
Call the dots in the $i$ th box {\em bosonic} if 
$\epsilon_i=0$ and {\em fermionic} if $\epsilon_i=1$.
Thus there are $l$ dots in the tableau in total among which 
$\epsilon_1a_1+\cdots + \epsilon_na_n$ are fermionic and the rest are bosonic. 

We are going to 
define a map $R=R_{l,m}: B_l \otimes B_m \rightarrow B_m \otimes B_l$
and a function $H=H_{l,m}: B_l \otimes B_m \rightarrow \Z_{\ge 0}$
by combinatorial algorithm, where
$\otimes$ may just be understood as a product of sets.
Thus for a given pair of tableaux ${\bf i}\otimes {\bf j} \in B_l \otimes B_m$,
we are to specify the right hand sides of 
\begin{align}\label{RH}
R({\bf i} \otimes {\bf j}) = {\bf b} \otimes {\bf a} \in B_m \otimes B_l,
\quad
H({\bf i} \otimes {\bf j}) = w \in \Z_{\ge 0}.
\end{align}
For $l \ge m$, it is done by the algorithm (i)--(iii) given below:

\begin{picture}(300,144)(-55,-24)

\put(-28,87){$\epsilon_1\!=\!0$}
\put(-28,67){$\epsilon_2\!=\!1$}
\put(-28,47){$\epsilon_3\!=\!0$}
\put(-28,27){$\epsilon_4\!=\!1$}
\put(-28,7){$\epsilon_5\!=\!0$}


\put(8,106){${\bf i}$}\put(43,107){${\bf j}$}
\multiput(0,0)(0,20){6}{\put(0,0){\line(1,0){20}}}
\put(0,0){\line(0,1){100}}\put(20,0){\line(0,1){100}}
\put(7.5,67){$\bullet$}
\put(0,40){\put(2,7){$\bullet$}\put(7.5,7){$\bullet$}\put(13,7){$\bullet$}}
\put(7.5,26){$\bullet$}
\put(2,7){$\bullet$}\put(7.5,7){$\bullet$}\put(13,7){$\bullet$}

\put(35,0){
\multiput(0,0)(0,20){6}{\put(0,0){\line(1,0){20}}}
\put(0,0){\line(0,1){100}}\put(20,0){\line(0,1){100}}
\put(7.5,86){$\bullet$}
\put(0,40){\put(5,7){$\bullet$}\put(10,7){$\bullet$}}
\put(7.5,26){$\bullet$}}

\drawline(41,49)(24,49)\drawline(24,49)(24,69)\drawline(24,69)(8,69)

\put(100,0){
\put(8,106){${\bf i}$}\put(43,107){${\bf j}$}
\multiput(0,0)(0,20){6}{\put(0,0){\line(1,0){20}}}
\put(0,0){\line(0,1){100}}\put(20,0){\line(0,1){100}}
\put(7.5,67){$\bullet$}
\put(0,40){\put(2,7){$\bullet$}\put(7.5,7){$\bullet$}\put(13,7){$\bullet$}}
\put(7.5,26){$\bullet$}
\put(2,7){$\bullet$}\put(7.5,7){$\bullet$}\put(13,7){$\bullet$}

\put(35,0){
\multiput(0,0)(0,20){6}{\put(0,0){\line(1,0){20}}}
\put(0,0){\line(0,1){100}}\put(20,0){\line(0,1){100}}
\put(7.5,86){$\bullet$}
\put(0,40){\put(5,7){$\bullet$}\put(10,7){$\bullet$}}
\put(7.5,26){$\bullet$}}

\drawline(43,88)(31,88)\drawline(31,88)(31,103)
\drawline(41,49)(24,49)\drawline(24,49)(24,69)\drawline(24,69)(8,69)
\drawline(47,49)(47,55)\drawline(47,55)(27.5,55)\drawline(27.5,55)(27.5,103)

\multiput(0,0)(0,2){3}{\put(26.2,104.5){.}\put(29.8,104.5){.}}

\drawline(43,28)(8,28)

\drawline(31.5,-4)(31.5,9)\drawline(31.5,9)(17,9)
\drawline(27.5,-4)(27.5,5.5)\drawline(27.5,5.5)(10,5.5)\drawline(10,5.5)(10,9)
\multiput(0,0)(0,-2){3}{\put(26.2,-6){.}\put(30.3,-6){.}}

}

\put(200,0){
\put(8,106){${\bf b}$}\put(43,106){${\bf a}$}
\multiput(0,0)(0,20){6}{\put(0,0){\line(1,0){20}}}
\put(0,0){\line(0,1){100}}\put(20,0){\line(0,1){100}}
\put(7.5,67){$\bullet$}
\put(7.5,26){$\bullet$}
\put(5,7){$\bullet$}\put(10,7){$\bullet$}

\put(35,0){
\multiput(0,0)(0,20){6}{\put(0,0){\line(1,0){20}}}
\put(0,0){\line(0,1){100}}\put(20,0){\line(0,1){100}}
\put(7.5,86){$\bullet$}
\put(0,43){\put(2,7){$\bullet$}\put(7.5,7){$\bullet$}\put(13,7){$\bullet$}}
\put(0,37){\put(5,7){$\bullet$}\put(10,7){$\bullet$}}
\put(7.5,26){$\bullet$}
\put(7.5,7){$\bullet$}}
}

\put(0,-6){\put(23,-17){(i)}\put(122,-17){(ii)}\put(221,-17){(iii)}}
\end{picture}

\begin{enumerate}
\item Choose a dot, say $d$, in ${\bf j}$ and connect it to a 
dot $d'$ in ${\bf i}$ to form a pair.
If $d$ is bosonic (resp. fermionic), 
$d'$ should be the lowest one 
among those located strictly higher (resp. not strictly lower) 
than $d$. 
If there is no such dot, take $d'$ to be the lowest one in ${\bf i}$.
Such a pair is called {\em winding}.
The lines pairing the dots are called {\em $H$-lines}.

\item Repeat (i) for yet unpaired dots until all dots in ${\bf j}$ 
are paired to some dots in ${\bf i}$.

\item Move the $l-m$ unpaired dots in ${\bf i}$ horizontally to ${\bf j}$.
The resulting tableaux define ${\bf b} \otimes {\bf a}$.
$w$ is the winding number (number of winding pairs).
\end{enumerate}

\vspace{0.2cm}
The above example is for $n=5$, $(\epsilon_1,\ldots, \epsilon_5) = (0,1,0,1,0)$,
$B_l \otimes B_m = B_8 \otimes B_4$ and shows 
\begin{align*}
R(01313 \otimes 10210) = 01012 \otimes 10511,\quad
H(01313 \otimes 10210) = 2.
\end{align*}

\renewcommand{\labelenumi}{(\arabic{enumi})}

\begin{remark}\label{re:air} \par \noindent
\begin{enumerate}
\item
In (i) and (ii), the $H$-lines depend on the order of choosing 
the dots from ${\bf j}$.
However, the final result of ${\bf b}\otimes {\bf a}$ 
and $w$ can be shown to be independent of it.

\item
The $H$-lines in the winding case are naturally interpreted as going up 
{\em periodically} along the tableaux.

\item
The condition of being bosonic or fermionic in (i) only refers to $d$ and 
does not concern $d'$. 

\item
When $l=m$, $R$ is trivial in that 
$R({\bf i} \otimes {\bf j}) = {\bf i} \otimes {\bf j}$, but 
$H({\bf i} \otimes {\bf j})$ remains nontrivial.

\item
The algorithm also specifies the number $c_t$ of the $H$-lines 
passing through the border between the $t$ th and 
the $(t\!+\!1)$ th components
in the tableaux ${\bf i}$ and ${\bf j}$ for $t \in \Z_n$.
The winding number is $c_0=c_n$.
For instance in the above diagram (ii), we see
$(c_1,\ldots, c_5)=(1,2,0,0,2)$.
They satisfy the piecewise linear relations:
\begin{alignat}{2}
&\qquad\;\text{$\epsilon_t=0$ case} & 
&\qquad\;\text{$\epsilon_t=1$ case} \nonumber\\
&\begin{cases}
a_t = j_t+(i_t-c_t)_+,\\
b_t = \min(i_t, c_t),\\
c_{t-1}= j_t+(c_t-i_t)_+,
\end{cases}&
&\begin{cases}
a_t = j_t+(i_t-j_t-c_t)_+,\\
b_t = \min(i_t, c_t+j_t),\\
c_{t-1}=(j_t+c_t-i_t)_+,
\end{cases}\label{nzm}
\end{alignat}
where $t \in \Z_n$ and $(x)_+ = \max(x,0)$.
Given ${\bf i}$ and ${\bf j}$, 
one may regard the last rows in (\ref{nzm}) as a closed system of  
piecewise linear equations on $c_1,\ldots, c_n=c_0$ whose 
solution determines ${\bf a}$ and ${\bf b}$ via
the first and the second rows.
We will argue the uniqueness of the solution in the proof of Theorem \ref{th:main}.
\end{enumerate}
\end{remark}

For $l<m$, the algorithm is replaced by the following (i)'--(iii)':

\renewcommand{\labelenumi}{(\roman{enumi})'}

\begin{enumerate}
\item Choose a dot, say $d$, in ${\bf i}$ and connect it to a 
dot $d'$ in ${\bf j}$ to form a pair.
If $d$ is bosonic (resp. fermionic), 
$d'$ should be the highest one 
among those located strictly lower (resp. not strictly higher) than $d$. 
If there is no such dot, take $d'$ to be the highest one in ${\bf j}$.
Such a pair is called winding.

\item Repeat (i)' for yet unpaired dots until all dots in ${\bf i}$ 
are paired to some dots in ${\bf j}$.

\item Move the $m-l$ unpaired dots in ${\bf j}$ horizontally to ${\bf i}$.
The resulting tableaux define ${\bf b} \otimes {\bf a}$.
$w$ is the winding number.
\end{enumerate}

\renewcommand{\labelenumi}{(\roman{enumi})}

Analogue of Remark \ref{re:air} apply to (i)'--(iii)' as well.
It can be shown that $R_{l,m}R_{m,l}=\mathrm{id}_{B_m \otimes B_l}$.
Thus $R$ is a {\em bijection}.
By the definition 
$H_{l,m}({\bf i}\otimes {\bf j})=H_{m,l}({\bf b}\otimes {\bf a})$ 
holds when $R_{l,m}({\bf i}\otimes {\bf j})={\bf b}\otimes {\bf a}$ or equivalently 
$R_{m,l}({\bf b}\otimes {\bf a}) = {\bf i}\otimes {\bf j}$.

The bijective map $R$ and the function $H$ are called 
(classical part of) 
{\em combinatorial $R$} and {\em energy}, respectively.
For $(\epsilon_1,\ldots, \epsilon_n)$ of the form 
$(0^r1^{n-r})$, it was first introduced for $r=0$ and $r=n$ as
Rule 3.10 and Rule 3.11 in \cite{NY} 
in the framework of crystal base theory \cite{Ka} 
of $U_q(\widehat{sl}_n)$, and later for general $r$ in \cite{HI}
based on a realization of $U_q(\mathfrak{gl}(r,n-r))$ crystals in \cite{BKK}.
Note that our algorithm for $r=0$ case, i.e. $\forall \epsilon_i=1$
coincides with \cite[Rule 3.10]{NY}  after reversing the conditions
`higher' and `lower'.
We suppose this is due to the right relation in (\ref{equiv}) indicating the 
interchange of $q=0$ and $q=\infty$ in the two papers.

We define the matrix element of the combinatorial $R$ as
\begin{align}\label{chie}
R^{{\bf a}, {\bf b}}_{{\bf i}, {\bf j}} = \begin{cases}
1 & \mathrm{if}\; R({\bf i} \otimes {\bf j}) = {\bf b} \otimes {\bf a},\\
0 & \mathrm{otherwise}.
\end{cases}
\end{align}

Now we state the main result.

\begin{theorem}\label{th:main}
Let $S^{{\bf a}, {\bf b}}_{{\bf i}, {\bf j}}(z)$ be the element 
(\ref{sss})--(\ref{sabij0}) of 
$S(z)=S_{l,m}(z)$ (\ref{wl}).
Set $R=R_{l,m}$ and $H=H_{l,m}$.
Then the following equality is valid:
\begin{align}\label{LRi}
(1-z)^{\delta_{l,m}}
\lim_{q\rightarrow 0}q^{-(m-l)_+}
S^{{\bf a}, {\bf b}}_{{\bf i}, {\bf j}}(z)= z^{H({\bf i}\otimes {\bf j})}
R^{{\bf a}, {\bf b}}_{{\bf i}, {\bf j}}.
\end{align}
\end{theorem}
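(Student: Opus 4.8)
The plan is to analyze the matrix element $S^{{\bf a},{\bf b}}_{{\bf i},{\bf j}}(z)$ in the form (\ref{sabij0}) as a sum over a single internal variable (using (\ref{Lins}) and the observation after (\ref{iceA})) and to extract the leading power of $q$ as $q\to 0$. First I would recall the $q\to 0$ limits of the individual building blocks: the maps in (\ref{mrei}) show that $\lim_{q\to 0}\Rm$ and $\lim_{q\to 0}{\mathscr L}$ are honest {\em functions}, i.e. the 3D $R$ and 3D $L$ become permutation-like matrices (after rescaling) with entries in $\{0,1\}$ up to explicit powers of $q$. Concretely one needs the precise statement: there are integers (depending on the indices) so that $q^{-N^{(\epsilon)}(a,b,c\mid i,j,k)}{\mathscr S}^{(\epsilon)\,a,b,c}_{i,j,k}\to \delta$-type selector enforcing exactly the piecewise-linear relations in the $\epsilon_t=0$ and $\epsilon_t=1$ columns of (\ref{nzm}) with $(i_t,j_t,c_t)$ the ``incoming'' data and $(a_t,b_t,c_{t-1})$ the ``outgoing'' data. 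This is the local content of the combinatorial tetrahedron equations drawn in the introduction, and it is essentially contained in \cite[eq.(2.41)]{KO1} together with the elementary computation (\ref{mrei}); I would state it as a lemma and verify it from (\ref{ropp}) and (\ref{lak}) by tracking minimal powers of $q$.

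Next I would feed this into (\ref{sabij0}). Writing $c_n=c_0$, the product $\prod_{t=1}^n {\mathscr S}^{(\epsilon_t)\,a_t,b_t,c_{t-1}}_{i_t,j_t,c_t}$ is, at $q=0$ and after extracting $q^{\sum_t N^{(\epsilon_t)}}$, the indicator that $(c_1,\dots,c_n)$ solves the closed piecewise-linear system in the last rows of (\ref{nzm}) and that ${\bf a},{\bf b}$ are then given by the first two rows. The factor $z^{c_0}$ survives as $z^{c_0}$. So modulo two points — (a) the total $q$-power equals $(m-l)_+$ up to the stated $(1-z)^{-\delta_{l,m}}$ correction, and (b) the system (\ref{nzm}) has a unique solution and it is the one produced by the combinatorial algorithm (i)--(iii) (resp.\ (i)'--(iii)') with $c_0=H({\bf i}\otimes{\bf j})$ — the identity (\ref{LRi}) follows, because the right-hand side is exactly $z^{c_0}$ times the indicator that $R({\bf i}\otimes{\bf j})={\bf b}\otimes{\bf a}$, and $R^{{\bf a},{\bf b}}_{{\bf i},{\bf j}}$ picks out that unique ${\bf a},{\bf b}$.

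For (b) I would argue directly with the piecewise-linear recursion: treating (\ref{nzm}) as defining $c_{t-1}$ from $c_t$ (a nondecreasing, piecewise-linear map of $c_t$ with slopes $0$ or $1$), the composite $c_0\mapsto c_n=c_0$ around the cycle is again nondecreasing piecewise-linear with integer breakpoints and slope $\le 1$, so it has a fixed point, and one shows the fixed point relevant to given $({\bf i},{\bf j})$ is unique by a ``flow conservation'' count: $c_{t-1}-c_t = j_t - b_t + (\text{stuff}\ge 0)$ forces $c_t$ to stabilize once it is large enough, matching the algorithmic claim that each dot of ${\bf j}$ (for $l\ge m$) gets paired. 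I would then match the two descriptions by induction on $n$ (peeling off one box), identifying $c_t$ with the number of $H$-lines crossing the $t$-th border, exactly as asserted in Remark~\ref{re:air}(5); this simultaneously proves the well-definedness claimed in Remark~\ref{re:air}(1). The normalization (a) is handled by the trace: in (\ref{sss}) the trace over $F$ of a monomial in ${\bf a}^\pm,{\bf k}$ produces a geometric series in $z$; when $l=m$ the surviving monomial at $q=0$ is $z^{\bf h}$-diagonal, giving $\sum_{m\ge0}z^m = (1-z)^{-1}$, which is the origin of the $(1-z)^{\delta_{l,m}}$ prefactor, while for $l\ne m$ a nonzero shift in ${\bf h}$ makes the trace a finite sum contributing only the overall $q^{(m-l)_+}$.

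The main obstacle I anticipate is (b), and within it the bookkeeping of powers of $q$: one must check that along the unique combinatorial solution {\em no} cancellation occurs (so the $q\to 0$ limit is genuinely nonzero and equal to $1$ after rescaling) while along any non-solution the entry vanishes faster — this requires the sharp ``minimal power'' form of the lemma about ${\mathscr S}^{(\epsilon)}$, not merely the maps (\ref{mrei}). Establishing that sharp form (equivalently, that the combinatorial tetrahedron picture is literally the $q=0$ reduction of (\ref{te3}) with the right gauge) is the technical heart; once it is in hand, everything else is the piecewise-linear fixed-point analysis and a one-variable geometric-series trace computation.
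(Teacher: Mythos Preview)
Your overall strategy---reduce (\ref{sabij0}) via (\ref{mrei}) to the closed piecewise-linear system (\ref{nzm}) and then do a fixed-point analysis on $c_n\mapsto c_0(c_n)$---is exactly the paper's approach, and your identification of the $(1-z)^{\delta_{l,m}}$ factor with a geometric series in the $l=m$ case is correct.

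However, you are overcomplicating two points. First, the ``sharp minimal power'' lemma you worry about is unnecessary for $l\ge m$: since $q^{-(m-l)_+}=1$ there, one simply takes $q\to 0$ termwise, and (\ref{mrei}) already says each ${\mathscr S}^{(\epsilon)\,a,b,c}_{\phantom{(\epsilon)}i,j,k}$ tends to a $0/1$ selector with no rescaling at all. There is no cancellation issue: the product over $t$ is itself $0$ or $1$, and it equals $1$ precisely on solutions of (\ref{nzm}). Your framework of exponents $N^{(\epsilon)}$ is not needed here. Second, for $l<m$ the paper does \emph{not} track $q$-powers directly; it invokes the inversion relation $\check S_{l,m}(z)\check S_{m,l}(z^{-1})=\rho(z)\,\mathrm{id}$ (with an explicit $\rho$ from \cite{KOS}) to reduce to the case $l\ge m$ already handled. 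This is where the factor $q^{(m-l)_+}$ actually comes from, not from a ``nonzero shift in ${\bf h}$'' in the trace. Your direct plan for $l<m$ would require genuinely subleading information about ${\mathscr S}^{(\epsilon)}$ and is the hard route.

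On the fixed-point analysis your sketch is in the right direction but not sharp enough. The paper's argument is cleaner: from (\ref{nzm}) one has $c_0(s+1)-c_0(s)\in\{0,1\}$; at the smallest $r$ where this jump is $1$, every $(\cdot)_+$ in the recursion is in its linear regime, forcing $c_0(r)=r+m-l$. For $l>m$ this gives $w:=c_0(0)=r+m-l<r$, so $c_0(s)=w$ for $0\le s\le r$ and the fixed point $s=w$ is unique. For $l=m$ the same computation shows every $c_n\ge w$ is a fixed point, and separately summing the first row of (\ref{nzm}) gives $|{\bf a}|=|{\bf j}|+(\text{nonneg.})$, forcing ${\bf a}(c_n)={\bf j}$, ${\bf b}(c_n)={\bf i}$; hence the sum collapses to $\delta^{{\bf a}}_{\bf j}\delta^{{\bf b}}_{\bf i}\sum_{c_n\ge w}z^{c_n}$. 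Your ``flow conservation'' heuristic and induction on $n$ are not needed.
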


\begin{proof}
Setting ${\check S}_{l,m}(z) = PS_{l,m}(z)$ with $P(u \otimes v)=v \otimes u$,
one can show the inversion relation
${\check S}_{l,m}(z) {\check S}_{m,l}(z^{-1}) 
=\rho(z)\,\mathrm{id}_{\W_m \otimes \W_l}$
with an explicit scalar function $\rho(z)$ by using 
(2.30), (2.31),  Proposition 2.1, (3.20), (3.21), 
Theorem 4.1, (6.10), (6.13) and (6.16) in \cite{KOS}.
This reduces the proof to the case $l\ge m$ 
on which we shall concentrate in the sequel.
From \cite[(2.32)]{KO1}, (\ref{Lex}) and (\ref{misto}) we have 
(see also Example \ref{ex:skrb})
\begin{equation}\label{mrei}
\lim_{q\rightarrow 0}
{\mathscr S}^{(\epsilon)\, a, b, c}_{\phantom{(\epsilon)}\,i, j, k}
=\begin{cases}
\lim_{q\rightarrow 0}\Rm^{a,b,c}_{i,j,k}
= \delta^a_{j+(i-k)_+}\delta^b_{\min(i,k)}\delta^c_{j+(k-i)_+}
&(\epsilon=0),\\
\lim_{q\rightarrow 0}{\mathscr L}^{a,b,c}_{i,j,k}
= \delta^a_{j+(i-j-k)_+}\delta^b_{\min(i,k+j)}\delta^c_{(j+k-i)_+}
&(\epsilon=1),
\end{cases}
\end{equation}
which also satisfies (\ref{Lins}).
This is non-vanishing exactly when
(\ref{nzm}) is satisfied after the replacement 
$(\epsilon, a,b,c,i,j,k) \rightarrow (\epsilon_t,a_t,b_t,c_{t-1},i_t,j_t,c_t)$.
Therefore substitution of (\ref{mrei}) into (\ref{sabij0}) leads to 
\begin{align}\label{mkr}
\lim_{q\rightarrow 0}S^{{\bf a}', {\bf b}'}_{{\bf i}, \;{\bf j}}(z)
= \sum_{c_n \ge 0} 
\delta^{{\bf a}'}_{{\bf a}(c_n)}\delta^{{\bf b}'}_{{\bf b}(c_n)}
\delta^{c_n}_{c_0(c_n)} z^{c_n},
\end{align}
where ${\bf a}(c_n) = ({a}_1,\ldots, {a}_n)$,
${\bf b}(c_n) = ({b}_1,\ldots, {b}_n)$ and 
$c_0(c_n)=c_0$ are determined 
from $c_n$ and ${\bf i}, {\bf j}$ uniquely by 
$(\ref{nzm})$ {\em without} the constraint $c_0=c_n$.
The origin of the factor $\delta^{c_n}_{c_0(c_n)}$ is 
the `periodic boundary condition' implied 
by the trace in (\ref{sss})--(\ref{sabij0}).
Thus the proof is reduced to the existence and the uniqueness problem  
of the solution to the equation $c_0(c_n)=c_0$ on $c_n$.

First we assume $l > m$. Then there uniquely exists 
the integer $w\ge 0$ such that $w=c_0(w)$.
In fact such $w$ is given by $w=c_0(0)$.
To see this note that $c_0(s+1)=c_0(s)$ or $c_0(s)+1$ for any $s$ because of 
$(x+1)_+ = (x)_+$ or $(x)_++1$. 
Let $r$ be the smallest non-negative integer such that 
$c_0(r)= w$ and $c_0(r+1)=w+1$.
From (\ref{nzm}) this can happen, either for 
$\epsilon_t=0$ or $1$, only if  
$c_{t-1}=c_t+j_t-i_t$ for all $1 \le t \le n$.
Then $w=c_0(r)$ implies $w=r+|{\bf j}|-|{\bf i}|=r+m-l<r$.
Thus the unique existence of the solution to $w=c_0(w)$ is obvious 
from the following graph.

\begin{picture}(70,55)(-140,-11)
\put(0,0){\vector(0,1){30}}
\put(-4,34){$y$}
\put(50,28){$y=c_0(s)$}
\put(-11,14){$w$}
\put(0,16){\line(1,0){40}}
\multiput(38,1.5)(0,3){5}{.}
\drawline(40,16)(48,24)

\multiput(16,1.5)(0,3){5}{.}
\put(14.5,-9){$w$}

\put(15,28){$y=s$}
\put(38,-9){$r$}
\put(0,0){\line(1,1){25}}
\put(0,0){\vector(1,0){55}}\put(60,-3){$s$}
\put(-6,-9){$0$}
\end{picture}

\noindent
Now (\ref{mkr}) reduces to the single term
$\lim_{q\rightarrow 0}S^{{\bf a}', {\bf b}'}_{{\bf i}, \;{\bf j}}(z)
= \delta^{{\bf a}'}_{{\bf a}(w)}\delta^{{\bf b}'}_{{\bf b}(w)}z^{w}$,
where $w$ is the unique solution of $w= c_0(w)$.
It is equal to the energy $H({\bf i} \otimes {\bf j})$
due to Remark \ref{re:air} (5), which also tells that
$R({\bf i} \otimes {\bf j}) = {\bf b}(w) \otimes {\bf a}(w)$. 
Therefore (\ref{LRi}) holds.

Next we consider the case $l=m$.
Again we have (\ref{mkr}) with (\ref{nzm}).
The sum of the first row of (\ref{nzm}) over $1 \le t \le n$
leads to $|{\bf a}|  = |{\bf j}| + f$ with $f\ge 0$.
Due to $|{\bf a}|  =l=m= |{\bf j}| $, $f=0$ must hold 
implying that 
${\bf b}(c_n) \otimes {\bf a}(c_n) = {\bf i} \otimes {\bf j}$.
As for the solution to $c_n=c_0(c_n)$, the previous argument 
tells that it holds for {\em all} $c_n\ge w=c_0(0)= H({\bf i} \otimes {\bf j})$.
Thus the right hand side of (\ref{mkr}) becomes 
$\delta^{{\bf a}'}_{{\bf j}}\delta^{{\bf b}'}_{{\bf i}}
\sum_{c_n \ge H({\bf i} \otimes {\bf j})}z^{c_n}$
in agreement with (\ref{LRi}).
\end{proof}

\begin{example}\label{ex:koyki}
Taking the limit $q\rightarrow 0$ in Example \ref{ex:yum} one has
\begin{align*}
\lim_{q\rightarrow 0} S(z)(|031\rangle \otimes |110\rangle) = 
z^2 |130\rangle \otimes |011\rangle
\end{align*}
for $S(z)= S_{4,2}(z)$. This agrees with the combinatorial $R$\footnote{
The reason for $011 \otimes 130$ rather than 
$130 \otimes 011$ is due to the opposite arrangement of 
${\bf a}$ and ${\bf b}$ in the definitions (\ref{sact}) and  (\ref{chie}).}
and the energy
\begin{align*}
R(031\otimes 110) = 011 \otimes 130,\quad H(031\otimes 110) = 2.
\end{align*}
\end{example}

\begin{example}\label{ex:hrka}
Another check of (\ref{LRi}), where the last line is due to Example \ref{ex:mho}.
\begin{table}[h]
\begin{tabular}{c|c|c|c}
$(\epsilon_1, \epsilon_2, \epsilon_3, \epsilon_4)$ & (0,1,0,1) & (0,1,0,0) & (0,0,0,1)\\
\hline
$R(0121\otimes 1101)$ \phantom{$\underset{s}{\overset{s}{1}}$}
& $0111 \otimes 1111$ & 
$0111 \otimes 1111$ & $0021 \otimes 1201$\\
\hline
$H(0121\otimes 1101)$ \phantom{$\overset{s}{1}$}
& 1 & 1 & 2\\
\hline
$\lim_{q\rightarrow 0}S^{1111,0111}_{0121,1101}(z)$ \!\!\!
\phantom{$\underset{s}{\overset{s}{1}}$}& 
$z$ & 
$z$ &
$0$
\end{tabular}
\end{table}
\end{example}

Let us describe the Yang-Baxter equation satisfied by the combinatorial $R$
as a corollary of (\ref{sybe}).
In order to properly treat the spectral parameter 
we introduce the {\em affine} crystal 
\begin{align*}
\mathrm{Aff}(B_l) =\{{\bf a}[d]\mid {\bf a} \in B_l, d \in \Z\}.
\end{align*}
It allows us to unify the classical part of the combinatorial $R$ 
and the energy $H$ in (\ref{RH}) in the (full) combinatorial $R$
${\mathcal R}={\mathcal R}_{l,m}: 
\mathrm{Aff}(B_l) \otimes \mathrm{Aff}(B_m)
\rightarrow 
\mathrm{Aff}(B_m) \otimes \mathrm{Aff}(B_l)$ as
\begin{align*}
{\mathcal R}({\bf i}[d] \otimes {\bf j}[e]) = 
{\bf b}[e-H({\bf i}\otimes {\bf j})] \otimes {\bf a}[d+H({\bf i}\otimes {\bf j})],
\end{align*}
where ${\bf b} \otimes {\bf a}$ is specified by 
${\bf b} \otimes {\bf a} = R({\bf i}\otimes {\bf j})$.

\begin{corollary}\label{co:sae}
The combinatorial $R$ satisfies the Yang-Baxter equation
\begin{align*}
({\mathcal R}_{l,m} \otimes 1)(1 \otimes {\mathcal R}_{k,m})({\mathcal R}_{k,l}\otimes 1) = 
(1 \otimes {\mathcal R}_{k,l})({\mathcal R}_{k,m}\otimes 1)(1 \otimes {\mathcal R}_{l,m})
\end{align*}
as maps 
$\mathrm{Aff}(B_k)\otimes \mathrm{Aff}(B_l) \otimes\mathrm{Aff}(B_m)
\rightarrow \mathrm{Aff}(B_m)\otimes \mathrm{Aff}(B_l) \otimes\mathrm{Aff}(B_k)$.
\end{corollary}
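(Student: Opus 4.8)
The plan is to derive the Yang--Baxter equation for the full combinatorial $R$ directly from the $q=0$ limit of the Yang--Baxter equation (\ref{sybe}) for $S_{l,m}(z)$, which was established for all $(\epsilon_1,\ldots,\epsilon_n)$ in Section \ref{sec:ybe}. The bridge between the two is Theorem \ref{th:main}: after normalizing away the scalar prefactor $(1-z)^{\delta_{l,m}}q^{-(m-l)_+}$, the matrix elements of $S_{l,m}(z)$ tend to $z^{H({\bf i}\otimes{\bf j})}R^{{\bf a},{\bf b}}_{{\bf i},{\bf j}}$, i.e.\ to the matrix elements of the full combinatorial $R$ written over the affine crystal once the spectral parameter $z$ is identified with the shift in the degree variable $d$. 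Concretely, a vector $|{\bf a}\rangle$ carrying spectral parameter $x$ corresponds to ${\bf a}[d]$ with $x\leftrightarrow$ a formal variable tracking $d$, and the energy appears as the power of $z$, exactly matching the degree-shift rule in the definition of ${\mathcal R}_{l,m}$.

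First I would restate (\ref{sybe}) with the three spectral parameters made explicit: for $\W_k\otimes\W_l\otimes\W_m$ one has
\begin{equation*}
\check S^{(12)}_{k,l}(x/y)\,\check S^{(23)}_{l,m}(y/w)\,\check S^{(12)}_{k,m}(x/w)
= \check S^{(23)}_{l,m}(y/w)\,\check S^{(12)}_{k,m}(x/w)\,\check S^{(23)}_{k,l}(x/y),
\end{equation*}
where $\check S = PS$ acts on the two indicated tensor factors; this is just (\ref{sybe}) composed with permutation operators so that each factor maps between the appropriately ordered spaces. Next I would take the limit $q\to 0$ of this identity after dividing each $\check S_{a,b}$ by its scalar normalization $(1-z)^{\delta_{a,b}}q^{-(b-a)_+}$; since the normalization factors are scalars they cancel consistently on the two sides (the multiset of index pairs $\{(k,l),(l,m),(k,m)\}$ is the same), so the limit of the normalized identity holds. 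By Theorem \ref{th:main} each normalized $\lim_{q\to0}\check S_{a,b}(z)$ is the matrix $z^{H}R$, which is precisely the matrix of ${\mathcal R}_{a,b}$ on the affine crystal once we read off the $z$-powers as degree shifts. Matching the two sides of the resulting identity of matrices with entries in $\Z[z,z^{-1}]$ (or $\Z[[z]]$) gives the claimed composition identity of maps on $\mathrm{Aff}(B_k)\otimes\mathrm{Aff}(B_l)\otimes\mathrm{Aff}(B_m)$.

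The main obstacle is bookkeeping rather than anything deep: one must check that the $q\to 0$ limit genuinely survives under the normalization, i.e.\ that no factor degenerates to $0$ or $\infty$ in a way that is not matched on the other side, and that the degree-shift convention in ${\mathcal R}$ is consistent with how products of the $z^H$ powers compose on both sides of the hexagon (this amounts to the cocycle-type identity $H$ satisfies, which is automatic from the $q=0$ limit). A secondary point is that Theorem \ref{th:main} gives the limit of $S_{l,m}(z)$ only on the subspace where $|{\bf a}|=|{\bf i}|$, $|{\bf b}|=|{\bf j}|$; but by (\ref{iceA}) the operator $S(z)$ is block-diagonal in exactly these weight spaces, so restricting the whole Yang--Baxter identity (\ref{sybe}) to a fixed triple of weights $(l,k\text{-block},\ldots)$ — equivalently to $\W_k\otimes\W_l\otimes\W_m$ — is legitimate and the limit may be taken block by block. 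Assembling these observations yields the corollary; I would present it as a short deduction, citing Theorem \ref{th:main} and (\ref{sybe}) and leaving the elementary degree-shift matching to the reader.
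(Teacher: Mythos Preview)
Your approach is exactly what the paper intends: the corollary is stated without proof, immediately after the sentence ``Let us describe the Yang--Baxter equation satisfied by the combinatorial $R$ as a corollary of (\ref{sybe}),'' so the paper regards it as a direct consequence of (\ref{sybe}) together with Theorem~\ref{th:main}. Your elaboration --- normalize each $S_{a,b}(z)$ by the scalar in (\ref{LRi}), observe that the same multiset of scalars appears on both sides, take $q\to 0$, and then read powers of the spectral parameters as degree shifts on the affine crystal --- is the right way to fill in the omitted details.

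One small correction: the displayed $\check S$ identity you wrote has the subscripts misplaced. Starting from $\W_k\otimes\W_l\otimes\W_m$ and applying operators right to left, the braid form of (\ref{sybe}) reads
\[
(\check S_{l,m}\otimes 1)(1\otimes \check S_{k,m})(\check S_{k,l}\otimes 1)
=(1\otimes \check S_{k,l})(\check S_{k,m}\otimes 1)(1\otimes \check S_{l,m}),
\]
which is precisely the pattern in the corollary. In your version the pair of indices on each factor does not match the tensor slots it acts on (for example, after applying $\check S^{(12)}_{k,l}$ the slots $2,3$ carry $\W_k\otimes\W_m$, not $\W_l\otimes\W_m$). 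This is just the bookkeeping you flagged, not a gap in the argument.
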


\begin{example}\label{ex:akna}
Let $n=4$ and $(\epsilon_1,\epsilon_2,\epsilon_3,\epsilon_4) = (1,0,1,0)$.
We apply the two sides of Corollary \ref{co:sae}  on the 
element from
$\mathrm{Aff}(B_4)\otimes \mathrm{Aff}(B_2) \otimes\mathrm{Aff}(B_1)$
in the bottom line. 

\begin{picture}(320,133)(-10,-9)

\put(0,105){0010[$f$]} \put(44,105){0011[$e\!-\!1$]} \put(96,105){1201[$d\!+\!1$]}

\put(20,80){\put(6,3){\vector(2,1){29}}\put(34,3){\vector(-2,1){29}}}
\put(115,83){\vector(0,1){15}}

\put(-6,70){0011[$e\!-\!1$]} \put(50,70){0010[$f$]} \put(96,70){1201[$d\!+\!1$]}

\put(70,45){\put(6,3){\vector(2,1){29}}\put(34,3){\vector(-2,1){29}}}
\put(15,48){\vector(0,1){15}}

\put(-6,35){0011[$e\!-\!1$]} \put(44,35){1210[$d\!+\!1$]} \put(100,35){0001[$f$]}

\put(20,10){\put(6,3){\vector(2,1){29}}\put(34,3){\vector(-2,1){29}}}
\put(114,13){\vector(0,1){15}}

\put(0,0){0211[$d$]} \put(50,0){1010[$e$]} \put(100,0){0001[$f$]}

\put(180,0){
\put(0,105){0010[$f$]} \put(44,105){0011[$e\!-\!1$]} \put(96,105){1201[$d\!+\!1$]}

\put(70,80){\put(6,3){\vector(2,1){29}}\put(34,3){\vector(-2,1){29}}}
\put(15,83){\vector(0,1){15}}

\put(0,70){0010[$f$]} \put(50,70){0211[$d$]} \put(100,70){1001[$e$]}

\put(20,45){\put(6,3){\vector(2,1){29}}\put(34,3){\vector(-2,1){29}}}
\put(113,48){\vector(0,1){15}}

\put(0,35){0211[$d$]} \put(50,35){0010[$f$]} \put(100,35){1001[$e$]}

\put(70,10){\put(6,3){\vector(2,1){29}}\put(34,3){\vector(-2,1){29}}}
\put(15,13){\vector(0,1){15}}

\put(0,0){0211[$d$]} \put(50,0){1010[$e$]} \put(100,0){0001[$f$]}
}
\end{picture}

At the top line the two sides coincide, confirming the Yang-Baxter equation.

\end{example}

\section*{Acknowledgments}
The author thanks Boris G. Konopelchenko, 
Raffaele Vitolo and organizers of {\it Physics and Mathematics of 
Nonlinear Phenomena}, June 20--17 2015 at Gallipoli, Italy, for warm hospitality.
He is also grateful to Masato Okado and Sergey Sergeev for collaboration
in their previous works.
This work is supported by 
Grants-in-Aid for Scientific Research No. 15K13429.

\end{document}